\newtheorem{theorem}{Theorem}[section]
\newtheorem{lemma}[theorem]{Lemma}
\newtheorem{corollary}[theorem]{Corollary}
\newtheorem{example}{Example}[section]
\newtheorem{definition}{Definition}[section]
\newtheorem*{example*}{Example}
\theoremstyle{remark}
\newtheorem*{remark}{Remark}
\title{Asymptotics of powers of random elements of compact Lie groups}
\begin{document}

\author[Phillips]{Donnelly Phillips{$^{\dag }$}}
\thanks{\footnotemark {$^\dag$} Research was supported in part by NSF Grant DMS-1255574.}
\address{Department of Mathematics \\
University of Virginia\\
Charlottesville, VA 22904, U.S.A.}
\email{dkp7tpj@virginia.edu}

\keywords{random matrix, high powers, limiting distribution, compact Lie group}
\subjclass[2020]{Primary 60B20}

\maketitle

\begin{abstract}
For a Haar-distributed element $H$ of a compact Lie group \(L\), Eric Rains proved in \cite{rains} that there is a natural number $D = D_L$ such that, for all $d\ge D$, the eigenvalue distribution of $H^d$ is fixed, and Rains described this fixed eigenvalue distribution explicitly. In the present paper we consider random elements $U$ of a compact Lie group with general distribution. In particular, we introduce a mild absolute continuity condition under which the eigenvalue distribution of powers of $U$ converges to that of $H^D$. Then, rather than the eigenvalue distribution, we consider the limiting distribution of $U^d$ itself.
\end{abstract}

\tableofcontents

\section{Introduction}

Let \(\mathbb{U}(N)\) be the group of \(N \times N\) unitary matrices, and let \(H\) be a Haar-distributed random element of \(\mathbb{U}(N)\). A topic of high interest in random matrix theory is understanding the patterns in the eigenvalues of \(H\), some of which arise in raising \(H\) to a power.  The joint density of the eigenvalues of \(H\) on \((\mathbb{S}^1)^N\) with respect to Haar measure on \((\mathbb{S}^1)^N\) is given by the Weyl integration formula as
\[\rho(z_1, \ldots , z_N) \:=\: \prod_{1\leq j < k \leq N} |z_j - z_k|^2 \, ;\]
see for example \cite{eliz}. In \cite{rains}, Rains discussed that, because this formula is a Laurent polynomial in \(N\) variables, and is of degree \(N-1\) in any one variable, the eigenvalues of \(H^{N}\) are iid uniform random variables on \(\mathbb{S}^1\).

In fact, Rains showed that this occurs more generally \cite[Theorem 2.1]{rains}.

\begin{theorem}\label{rains1}
Let \(L\) be a compact Lie group that has a maximal torus \(T\) of dimension \(n\), and let \(\phi : L \rightarrow GL(V)\) be a unitary representation of \(L\) on  a vector space \(V\) of dimension \(N\). Let \(C \subseteq L\) be a connected component of \(L\), and suppose that \(H\) is Haar-distributed on \(C\). Let \(\{Y_j\}_{1 \leq j\leq n}\) be iid uniform random variables on \(\mathbb{S}^1\). Then there exist a set of Laurent monomials \(\{p_j\}_{1 \leq j \leq N}\) on \(\mathbb{C}^n\) and \(D = D_L \in \mathbb{N}\) such that, for all \(d \geq D\),  the eigenvalue distribution of \(\phi(U^d)\) has the distribution of the tuple \((p_j(Y_1, \ldots , Y_n))_{1 \leq j \leq N}\).
\end{theorem}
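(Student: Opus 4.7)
The plan is to prove the theorem by reducing to a computation on a maximal torus via Weyl integration, and then using Fourier analysis to show that the joint moments of the eigenvalues of $\phi(H^d)$ stabilize to those of a fixed Laurent-monomial target once $d$ exceeds a threshold depending only on $L$. I first describe the argument for the identity component $C = L_0$ and then indicate the modifications needed in the disconnected case.

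For the identity component, Weyl integration identifies, for any class function $f$ on $L_0$, the expectation $\mathbb{E}[f(H)] = \frac{1}{|W|}\int_T f(t)\,|\Delta(t)|^2\,dt$, where $T \cong (\mathbb{S}^1)^n$ is a maximal torus, $W$ is the Weyl group, and $|\Delta(t)|^2 = \prod_{\alpha \in \Phi}(1 - t^\alpha)$ is the squared Weyl denominator (a finite Laurent polynomial on $T$). Decomposing $V = \bigoplus_{j=1}^{N} V_{\lambda_j}$ into $T$-weight spaces with multiplicity, the eigenvalues of $\phi(t)$ are the Laurent monomials $t^{\lambda_j}$ in the torus coordinates $t = (z_1, \ldots, z_n)$, so the eigenvalues of $\phi(H^d)$ are $(t^{d \lambda_j})_j$. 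The claimed target tuple $(p_j(Y))_j$ is then $(Y^{\lambda_j})_j$ for $Y$ uniform on $T$.

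To match the two distributions I would compare all joint polynomial moments. By multilinearity, this reduces to checking, for every weight $\nu = \sum_j k_j \lambda_j$ in the character lattice $X(T)$, that $\mathbb{E}[t^{d\nu}]$ equals $\mathbb{E}[Y^\nu]$, the latter being $1$ if $\nu = 0$ and $0$ otherwise. Here $\mathbb{E}[t^{d\nu}]$ equals $\frac{1}{|W|}$ times the coefficient of $t^{-d\nu}$ in the Laurent expansion of $|\Delta|^2$, and because $|\Delta|^2$ is a finite Laurent polynomial its Fourier support is a bounded subset $S \subset X(T)$. Since $X(T) \cong \mathbb{Z}^n$ is discrete, it has a shortest nonzero vector, so one can choose a single threshold $D = D_L$ large enough that $d\nu \notin -S$ for every $d \ge D$ and every nonzero $\nu \in X(T)$. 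For such $d$ the two moments agree on $\nu \ne 0$ (both vanish) and on $\nu = 0$ (both equal $1$), proving the case $C = L_0$.

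For a non-identity component $C = g_0 L_0$, I would use a twisted Weyl integration formula expressing the conjugation-invariant statistics of $H$ as a weighted integral over $g_0 t$ with $t$ on a $g_0$-stable maximal torus, using a twisted Weyl denominator $\Delta_C$ that is again a finite Laurent polynomial with bounded Fourier support. The weight decomposition of $V$ must be replaced by a decomposition into $g_0$-orbits of weight spaces; on an orbit of length $k$ the eigenvalues of $\phi(g_0 t)$ are $k$-th roots of a specific Laurent monomial in $t$ times a root of unity, and taking $d$-th powers for $d$ sufficiently large (and suitably divisible to absorb the root-of-unity structure) yields genuine Laurent monomials $p_j(z_1, \ldots, z_n)$ independent of $d$. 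The moment-matching argument of the previous paragraph then carries over, applied to $|\Delta_C|^2$. The main obstacle in the plan is this bookkeeping in the disconnected case: identifying the eigenvalues of $\phi((g_0 t)^d)$ as honest Laurent monomials independent of $d$, rather than as $k$-th roots that depend on the $g_0$-orbit structure on weights. Once that is in hand, the Fourier vanishing argument is robust and supplies the uniform $D_L$.
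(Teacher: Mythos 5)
This statement is Rains' Theorem 2.1, which the paper cites rather than proves, so there is no internal proof to compare against directly. That said, the mechanism you use in the connected case --- that the squared Weyl denominator $|\Delta|^2$ is a finite Laurent polynomial on $T$, hence has finite Fourier support $S$, so that $\int_T t^{d\nu}|\Delta|^2\,dt$ vanishes for all nonzero $\nu$ once $d$ pushes $d\nu$ outside $-S$ --- is exactly the mechanism the paper points to in Remark~\ref{lem3p15}. Your moment-matching version of it is correct: the weight decomposition of $V$ gives the monomials $p_j = z^{\lambda_j}$ (cf.\ Lemma~\ref{toruspolys}); comparing joint moments against those of $(Y^{\lambda_j})_j$ reduces to Fourier coefficients of $|\Delta|^2$; and since $S$ depends only on the root system and the shortest nonzero vector of $X(T)$ depends only on $T$, the threshold depends only on $L$, which justifies writing $D = D_L$. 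The connected-component case is therefore handled satisfactorily.

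The gap you flag in the disconnected case is genuine, and it is more than bookkeeping. Your fix --- taking $d$ ``suitably divisible to absorb the root-of-unity structure'' --- quietly changes the quantifier from ``for all $d \ge D$'' to ``for all $d \ge D$ in a fixed congruence class,'' and the stronger statement as written cannot hold in general. A concrete obstruction: take $L = \mathbb{O}(2)$, $C$ the reflection component, $\phi$ the standard $2$-dimensional representation, so $T = \mathbb{SO}(2)$ and $n=1$. A Haar-random $H \in C$ satisfies $H^2 = I$, so the eigenvalues of $\phi(H^d)$ are $(1,-1)$ for $d$ odd and $(1,1)$ for $d$ even; no single tuple of Laurent monomials $(p_1(Y), p_2(Y))$ in one uniform $Y \in \mathbb{S}^1$ matches both, and the dependence on $d \bmod 2$ never stabilizes. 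So a correct proof of the disconnected case necessarily involves either restricting $d$ modulo the order of $C$ in $L/L_0$ or reformulating the target distribution, and either way the ``eigenvalues of $\phi(g_0 t)$ are $k$-th roots'' step has to be turned into an actual description of the eigenvalues of $\phi((g_0 t)^d)$ for general $d$, not just multiples of $k$. Since the paper restricts to connected $L$ for its own results, this gap does not affect anything downstream, but your proof as written does not establish the theorem in the generality stated.
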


For example, let \(L = C = \mathbb{U}(N)\). Then for \(d \geq D := N\), the eigenvalue distribution of \(H^d\) is the distribution of \((Y_1, \ldots , Y_N)\). When \(L = C = \mathbb{SU}(N)\), then for \(d \geq D := N\), the eigenvalue distribution of \(H^d\) is the distribution of \((Y_1, \ldots, Y_{N-1}, \overline{Y_1 \ldots Y_{N-1}})\). For \(N = 2n+1\), \(L = C = \mathbb{SO}(N)\), for \(d \geq D := N\), the eigenvalue distribution of \(H^d\) is the distribution of \((Y_1, \ldots, Y_n, \overline{Y_1} , \ldots , \overline{Y_n}, 1)\).

From these examples, we see that we need the monomials \(\{p_j\}_{1 \leq j \leq N}\) partly to model the fact that the eigenvalues may have restrictions or be limited in their degrees of freedom. Intuitively, this means that \(H\) raised to a sufficiently high power has eigenvalues that are as uniformly distributed as possible.

There is much more that can be said regarding patterns in the eigenvalues of Haar measure arising from powers. For \(H\) Haar-distributed on a compact connected Lie group, Rains in \cite{rains2} explicitly describes the eigenvalue distributions of \(H^d\) for \(d < D\). In \cite{tracepowers} Diaconis and Shahshahani show that for \(H\) Haar-distributed on \(\mathbb{U}(N)\),  the trace of \(H^m\) is asymptotically normally distributed as \(N \rightarrow \infty\). There is also interest in analyzing powers, traces, and moments for other distributions on \(\mathbb{U}(N)\), such as unitary Brownian \cite{ubm_levy_schurweyl}, \cite{ubm_dalq_weincalc}.

In this paper, we present results that are related to the results stated above, but different in nature. In works like \cite{rains} and \cite{rains2}, the focus lies in using algebraic methods to derive exact formulas for powers of random Lie group elements with specific distributions. In this work, we will instead use analysis techniques to show that, for a wide class of random elements with  a weak continuity-type assumption, an asymptotic phenomenon occurs. Also in contrast to other works, we discuss the limit of the random group element itself.

\emph{Acknowledgement.} The author would like to recognize his advisor Tai Melcher for her many comments and suggestions. He would also like to thank Mark Meckes and Todd Kemp for their helpful remarks in this paper's earliest stage. The author is grateful for receiving support from the NSF.

\subsection{Background and notation}

We set here the background and notation for  compact Lie groups, much of which is based on the presentation in \cite{eliz} and \cite{hall}. A \emph{Lie group} \(L\) is a differentiable manifold that is also a group, such that the multiplication and inverse operators are smooth maps. For simplicity, we will restrict to the case when \(L\) is connected. For a Lie group \(L\), there exists a translation-invariant measure \(\mu_L\), such that for all \(g \in L\) and measurable \(A \subseteq L\), \(\mu_L(gA) = \mu_L(A) = \mu_L(Ag)\). Moreover, this measure is unique up to scalar multiple. When \(L\) is compact, \(\mu_L(L) < \infty\), and thus we may normalize \(\mu_L\) and assume \(\mu_L(L) = 1\). Being absolutely continuous with respect to Haar measure intuitively means that there is no ``weight'' or ``mass'' being assigned to submanifolds of smaller dimension, and in particular means that the distribution does not have point-masses. For example, a random matrix in \(\mathbb{SU}(N)\) would not have distribution that is absolutely continuous with respect to \(\mu_{\mathbb{U}(N)}\), nor would a random matrix that took on countably-many values with probability 1.

A \emph{representation} is a continuous homomorphism \(\phi: L \rightarrow GL(V)\) for some vector space \(V\). If \(V\) is equipped with an inner-product, we say \(\phi\) is unitary if \(\phi(g)\) is unitary for all \(g \in L\). It is a theorem (\cite[Proposition 2.2.1]{applebaum}) that, given a representation of a compact Lie group \(\phi : L \rightarrow GL(V)\), there exists an inner product on \(V\) that makes \(\phi\) a unitary representation. Thus, without loss of generality, we will assume that our representations are unitary. Compact Lie groups can always be realized as closed subgroups of \(\mathbb{U}(N)\) for some \(N\), and as such the results of this paper can be interpreted as statements about random unitary matrices.

Every compact Lie group \(L\) has a \emph{maximal torus}, which is a Lie subgroup that is isomorphic as a Lie group to a torus, and is maximal under set inclusion over all such Lie subgroups. For example, in \(\mathbb{U}(N)\), the subset of all diagonal matrices is a maximal torus. All elements of \(L\) lie in a maximal torus, and all maximal tori are conjugate to each other. It follows that given a maximal torus \(T\), for any element \(g \in L\),  there exists an element in \(T\) conjugate to \(g\).

What follows can be referenced in chapter 11 of \cite{hall}. This paper will include discussion of the space \(L/T\). Since we cannot expect \(T\) to be normal in \(L\), \(L/T\) is not a group, but we may still regard it as a collection of cosets, and it is a compact smooth manifold. 
Furthermore, \(L\) naturally acts on \(L/T\) on the left via \(g \cdot aT = gaT\). \(L/T\) has an analogue of Haar measure, \(\mu_{L/T}\), which is invariant under the action of \(L\), so that for all \(g \in L\) and measurable \(A \subseteq L/T\), \(\mu_{L/T}(gA) = \mu_{L/T}(A)\).
We define the smooth map \(\psi : L/T \times T \rightarrow L\) as 
\begin{equation}\label{psimap}
\psi(v T, t) = vtv^{-1}   \,.
\end{equation}
It is known that, on a set of full measure, \(\psi\) is a \([N(T):T]\)-to-1 map (where \(N(T)\) is the normalizer of \(T\) in \(L\)) with bijective differential \(d\psi\).

To summarize the notation that will be used, we assume that \(L\) is a compact, connected Lie group, \(T\) is a maximal torus, and \(U\) is a random element of \(L\). Let \(\mu_L\) denote Haar measure on \(L\), and let \(H_L\) denote a Haar-distributed element of \(L\). Let \(D \in \mathbb{N}\) denote the exponent described in Theorem \ref{rains1}, so that the eigenvalues of \((H_L)^d\) are distributed as in Theorem \ref{rains1} for any \(d \geq D\).

\subsection{Statement of main results}

To state the main results, we first offer a definition. We say that the random pair \((U_{L/T} , U_T) \in L/T \times T\) is a random preimage of \(U\) if \((U_{L/T}, U_T) \in \psi^{-1}(U)\) almost surely. This definition will be restated in Definition \ref{def}, along with discussion and examples. We also fix a Lie group isomorphism \(z = (z_1, \ldots , z_n) : T \rightarrow (\mathbb{S}^1)^n\).


Section 2 will be devoted to demonstrating the following theorem.

\begin{theorem}\label{mainevalthm}
Let \(\phi\) be a unitary representation of \(L\) on a vector space \(V\) of dimension \(N\) and \(T\) a maximal torus of dimension \(n\). For \(U\) a random element of \(L\), let \((U_{L/T}, U_T)\) be a random preimage of \(U\). If \(Law(U_T) \ll \mu_T\) (for example, if \(Law(U) \ll \mu_L\)), then the eigenvalues of \(\phi(U^m)\) converges in distribution to the eigenvalues of \(\phi(H_L^D)\) as \(m \rightarrow \infty\). Moreover, if the marginal density of \(U_T\) can be expressed as a finite Laurent polynomial in the \(z_j\)'s in which the degree in any \(z_j\) is at most \(M\), then the eigenvalues of \(\phi(U^m)\) are exactly equal in distribution to the eigenvalues of \(\phi(H_L^D)\) for \(m > M\).
\end{theorem}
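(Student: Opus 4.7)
The plan is to reduce the problem to a statement about the toral coordinates and then apply Fourier analysis on $(\mathbb{S}^1)^n$. First I would observe that $\psi(U_{L/T},U_T)=U$ gives $U=gU_Tg^{-1}$ for any coset representative $g$ of $U_{L/T}$, so $\phi(U^m)=\phi(g)\phi(U_T)^m\phi(g)^{-1}$ has the same eigenvalues as $\phi(U_T^m)$. Decomposing $\phi|_T$ into characters produces weights $\lambda_1,\dots,\lambda_N\in\mathbb{Z}^n$ such that, via $z:T\to(\mathbb{S}^1)^n$, the eigenvalue tuple of $\phi(t)$ is $\bigl(z(t)^{\lambda_j}\bigr)_{j=1}^N$. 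Since $z(t^m)=z(t)^m$, writing $Z:=z(U_T)$ the eigenvalues of $\phi(U_T^m)$ form the tuple $\Phi(Z_1^m,\dots,Z_n^m)$, where $\Phi:(\mathbb{S}^1)^n\to(\mathbb{S}^1)^N$ is the continuous map $w\mapsto(w^{\lambda_j})_{j=1}^N$. These weight monomials are exactly the Rains monomials $p_j$ of Theorem~\ref{rains1}, so the eigenvalues of $\phi(H_L^D)$ form the tuple $\Phi(Y_1,\dots,Y_n)$ with the $Y_j$ iid uniform. By the continuous mapping theorem it then suffices to establish
\[
(Z_1^m,\dots,Z_n^m)\xrightarrow{d}(Y_1,\dots,Y_n)\qquad\text{as }m\to\infty,
\]
together with the corresponding exact equality in distribution under the polynomial-density hypothesis.

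For the convergence, the assumption $Law(U_T)\ll\mu_T$ says $Z$ has some density $f\in L^1((\mathbb{S}^1)^n)$. Because trigonometric polynomials are dense in $C((\mathbb{S}^1)^n)$ by Stone--Weierstrass, weak convergence on this compact abelian group is equivalent to convergence of every character expectation. Both sides equal $1$ at $k=0$, and for $k\in\mathbb{Z}^n\setminus\{0\}$,
\[
E\bigl[(Z^m)^k\bigr]=E\bigl[Z^{mk}\bigr]=\widehat{f}(-mk)\longrightarrow 0
\]
as $m\to\infty$ by the Riemann--Lebesgue lemma on the torus (since $|mk|\to\infty$), matching $E[Y^k]=0$.

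For the strengthening, write $f=\sum_{|\alpha|_\infty\le M}c_\alpha\,z^\alpha$; then $\widehat{f}(\beta)=0$ whenever $|\beta_i|>M$ for some coordinate $i$. If $k\ne 0$ and $m>M$, any nonzero component $k_i$ satisfies $|mk_i|\ge m>M$, so $\widehat{f}(-mk)=0$. Hence $E[(Z^m)^k]=E[Y^k]$ for every $k\in\mathbb{Z}^n$, and since Fourier coefficients determine finite Borel measures on the torus, $(Z_1^m,\dots,Z_n^m)$ is exactly Haar-distributed; applying $\Phi$ yields equality in distribution with the eigenvalue tuple of $\phi(H_L^D)$.

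The core analytic input is the Fourier-continuity argument on $(\mathbb{S}^1)^n$ (Stone--Weierstrass plus Riemann--Lebesgue), both standard on a compact abelian group. The main bookkeeping point, and the only place I anticipate any delicacy, is correctly identifying the Rains monomials $p_j$ of Theorem~\ref{rains1} with the weight characters $y^{\lambda_j}$ of $\phi|_T$, since this identification is what allows the continuous mapping step to match the distribution of eigenvalues of $\phi(H_L^D)$ to $\Phi(Y_1,\dots,Y_n)$.
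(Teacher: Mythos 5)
Your proposal is correct and follows essentially the same route as the paper's primary (Fourier-analytic) proof: reduce to the toral coordinate $Z=z(U_T)$ via conjugation-invariance of eigenvalues, identify the weight monomials of $\phi|_T$ with Rains' monomials (the paper's Lemma~\ref{toruspolys}), apply the continuous mapping theorem, and prove $(Z_1^m,\dots,Z_n^m)\Rightarrow(Y_1,\dots,Y_n)$ by showing the nonzero Fourier coefficients vanish via Riemann--Lebesgue (the paper's Lemma~\ref{fourierlemma} and first proof of Theorem~\ref{torus1}), with the finite-Laurent-polynomial case handled exactly as in Remark~\ref{lem3p15}. The paper also offers an alternative proof of the toral convergence by step-function approximation and a Riemann-sum argument, but your Fourier argument matches its main line of reasoning.
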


The first part of this theorem discusses the behavior for general distributions on \(L\) under power maps, and is proven in this work with analysis techniques. On the other hand, the second part of this theorem will follow from an observation already made in \cite{rains}, which is restated in Remark \ref{lem3p15}, and will apply to uniformly-distributed \(L\)-valued random elements. Indeed, the Weyl integral formula tells us that the density of the eigenvalues of \(H_L\) will consist of a finite Laurent polynomial, and Rains showed that this is also the case for a Haar-distributed element of a connected component of a disconnected compact Lie group.

We will also discuss the limiting distribution \(U^m\) itself. In section 3, we will prove the following.
\begin{theorem}\label{lastthm}
If \(Law(U_T) \ll \mu_T\), then \(U^m\) converges in distribution to \(\psi(U_{L/T}, Y)\), where \(Y\) is distributed as \(\mu_T\) and independent of \(U_{L/T}\).
\end{theorem}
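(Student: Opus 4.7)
The plan is to reduce the statement to a Fourier-analytic claim on the torus $T$, using that $\psi$ conjugates $U_T$ to $U$. Since $(U_{L/T}, U_T)$ is a random preimage of $U$, any measurable representative $v$ of $U_{L/T}$ satisfies $U = v U_T v^{-1}$ almost surely, and hence
$$U^m = v U_T^m v^{-1} = \psi(U_{L/T}, U_T^m) \qquad \text{a.s.}$$
Because $\psi$ is continuous, the continuous mapping theorem reduces the problem to establishing the joint weak convergence
$$(U_{L/T}, U_T^m) \;\Longrightarrow\; (U_{L/T}, Y) \qquad (m \to \infty),$$
where $Y \sim \mu_T$ is independent of $U_{L/T}$.

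By the Stone--Weierstrass theorem, the linear span of product functions $(vT, t) \mapsto \Phi(vT)\chi(t)$ with $\Phi \in C(L/T)$ and $\chi \in \widehat{T}$ is dense in $C(L/T \times T)$: characters of $T$ separate points of $T$, $C(L/T)$ separates points of $L/T$, and the product algebra contains the constants and is closed under complex conjugation (since $\overline{\chi} = \chi^{-1}$). It therefore suffices to show, for every $\Phi$ and $\chi$,
$$\mathbb{E}\bigl[\Phi(U_{L/T})\,\chi(U_T)^m\bigr] \;\longrightarrow\; \mathbb{E}[\Phi(U_{L/T})]\int_T \chi\, d\mu_T.$$
For the trivial character this identity is exact for every $m$; for a nontrivial character the right-hand side is $0$, so the task becomes to show the left-hand side vanishes as $m \to \infty$.

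The main step is the case of nontrivial $\chi$. Define the complex measure $\mu_\Phi$ on $T$ by $\mu_\Phi(A) = \mathbb{E}[\Phi(U_{L/T})\,\mathbbm{1}_A(U_T)]$. Its total variation is dominated by $\|\Phi\|_\infty \,\mathrm{Law}(U_T)$, which by hypothesis is absolutely continuous with respect to $\mu_T$. Hence $\mu_\Phi \ll \mu_T$, and by Radon--Nikodym there exists $f_\Phi \in L^1(T,\mu_T)$ with $d\mu_\Phi = f_\Phi\, d\mu_T$. Using the fixed isomorphism $z: T \to (\mathbb{S}^1)^n$, write $\chi(t) = z(t)^{\mathbf{k}}$ with $\mathbf{k} \in \mathbb{Z}^n \setminus \{0\}$; then $\chi(U_T)^m = z(U_T)^{m\mathbf{k}}$, and
$$\mathbb{E}\bigl[\Phi(U_{L/T})\,\chi(U_T)^m\bigr] = \int_T z(t)^{m\mathbf{k}} f_\Phi(t)\, d\mu_T(t) = \widehat{f_\Phi}(-m\mathbf{k}).$$
Since $|m\mathbf{k}| \to \infty$, the classical Riemann--Lebesgue lemma on $(\mathbb{S}^1)^n$ forces $\widehat{f_\Phi}(-m\mathbf{k}) \to 0$, completing the verification.

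The main obstacle is the third step, and specifically the passage from the marginal hypothesis $\mathrm{Law}(U_T) \ll \mu_T$ to an object on which Riemann--Lebesgue can act. The joint law of $(U_{L/T}, U_T)$ may be singular with respect to $\mu_{L/T} \otimes \mu_T$, so regular conditional densities on $T$ given $U_{L/T}$ need not exist in any usable form. The trick that sidesteps this difficulty is that the test function $\Phi$ only enters through the \emph{signed} marginal $\mu_\Phi$, which inherits absolute continuity from $\mathrm{Law}(U_T)$ by simple domination. One $L^1$ density at a time is all one needs.
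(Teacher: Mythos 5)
Your proof is correct, and it takes a genuinely different route from the paper's.

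The paper's proof of Theorem~\ref{lastthm} proceeds by conditioning: for each measurable $A\subseteq L/T$ with $P(U_{L/T}\in A)>0$, it forms the conditioned random variable $V_A = U_T\mid\{U_{L/T}\in A\}$, checks $\mathrm{Law}(V_A)\ll\mu_T$ by the same domination you use, applies Theorem~\ref{torus2} to get $(V_A)^m\Rightarrow Y$, and thereby obtains $P\bigl((U_{L/T},U_T^m)\in A\times B\bigr)\to P(U_{L/T}\in A)\,\mu_T(B)$ for every measurable rectangle, finishing with a $\pi$-system argument. Your proof instead dualizes: you observe that the characters $\chi\in\widehat T$ together with $C(L/T)$ generate a dense $*$-subalgebra of $C(L/T\times T)$ via Stone--Weierstrass, and for each $\Phi\in C(L/T)$ and nontrivial $\chi$ you form the complex measure $\mu_\Phi(\cdot)=\mathbb{E}[\Phi(U_{L/T})\mathbbm{1}_{\{U_T\in\cdot\}}]$, note $|\mu_\Phi|\le\|\Phi\|_\infty\,\mathrm{Law}(U_T)\ll\mu_T$, and hit the resulting $L^1$ density with Riemann--Lebesgue. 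The two routes are morally dual (indicators of measurable rectangles versus continuous product test functions), but the details differ: your argument is a direct extension of the Fourier-analytic proof of Theorem~\ref{torus1}, is entirely self-contained (it does not invoke Theorem~\ref{torus2}), and lands directly on weak convergence, whereas the paper's argument establishes convergence on rectangles first and then needs to upgrade to weak convergence (a passage the paper handles somewhat tersely via a $\pi$-system remark, and where one must in fact take a little care since rectangle-wise convergence of probability measures does not by itself imply convergence on every Borel set). The signed-measure/domination observation is exactly the right way to sidestep the absence of a usable conditional density of $U_T$ given $U_{L/T}$, and it is the one genuinely new ingredient your proof needs beyond what Section~2.1 already established; the paper's proof accomplishes the same end by passing to conditional laws $V_A$ one set at a time. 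Both are correct; yours is a touch more streamlined.
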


As will be pointed out in Remarks \ref{thmnotobv1} and \ref{thmnotobv2},  it is interesting that the weak convergence in Theorem \ref{mainevalthm} and Theorem \ref{lastthm} holds in spite of any dependence among the eigenvalues or with \(U_{L/T}\).

Section 4 includes a few results illustrating that the conditions in Theorems \ref{mainevalthm} and \ref{lastthm} are independent of the choice of random preimage.

As a final remark before starting the proofs, it should be apparent that these results will not hold for arbitrary random elements of \(L\). For example, if the distribution of \(U\) consisted of point masses, then the distribution of powers of \(U\) would also have point masses, as would that of the eigenvalues, and the desired convergence could never be achieved. However, the condition that this paper describes is actually weaker than the absolute continuity of the distribution of \(U\) with respect to Haar measure on \(L\), which will be illustrated in Example \ref{lastexample}.

\section{Limiting distributions of eigenvalues}

For this section, we let \(X_1, \ldots , X_n \in [0,2\pi )\) be random angles, and let \(\nu\) be the distribution of \(X = (X_1, \ldots , X_n)\).  Then for \(m \in \mathbb{N}\), we will denote the distribution of \(mX = (mX_1 , \ldots , mX_n)\) as \(\nu^{(m)}\), where \(mX_j\) is interpreted modulo \(2\pi\). Let \(\mu_{[0,2\pi)^n}\) denote Lebesgue measure on \(\mathbb{R}^n\) restricted to \([0,2\pi)^n\). We will demonstrate the following theorem.

\begin{theorem}
\label{torus1}
If \(\nu \ll \mu_{[0,2\pi)^n}\), then \(\nu^{(m)}\) converges weakly to \(\mu_{[0,2\pi)^n}\).
\end{theorem}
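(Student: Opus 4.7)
The plan is to establish weak convergence on the torus \([0,2\pi)^n \cong \mathbb{T}^n\) by checking convergence of Fourier coefficients (equivalently, of the characteristic functions of \(\nu^{(m)}\)). Because \(\mathbb{T}^n\) is compact, the Stone--Weierstrass theorem shows that the linear span of the characters \(\chi_k(x) = e^{i\langle k, x\rangle}\), \(k \in \mathbb{Z}^n\), is uniformly dense in \(C(\mathbb{T}^n)\), so it suffices to prove that \(\int \chi_k \, d\nu^{(m)} \to \int \chi_k \, d\mu_{[0,2\pi)^n}\) for every \(k \in \mathbb{Z}^n\).

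A short change-of-variables computation yields the key identity
\[
\int \chi_k \, d\nu^{(m)} \;=\; \widehat{\nu}(mk),
\]
using \(\chi_k(mx) = \chi_{mk}(x)\) together with the \(2\pi\)-periodicity of each character in each coordinate (which is what allows us to ignore the reduction of \(mX\) modulo \(2\pi\)). For \(k = 0\), both sides equal \(1\). For \(k \neq 0\), the Fourier coefficient of the (normalized) uniform measure is \(0\), so the desired convergence reduces to showing \(\widehat{\nu}(mk) \to 0\) as \(m \to \infty\).

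This is precisely the place where the assumption \(\nu \ll \mu_{[0,2\pi)^n}\) is used. Writing \(d\nu = f \, dx\) with \(f \in L^1([0,2\pi)^n)\), one has \(\widehat{\nu}(mk) = \widehat{f}(mk)\), and the Riemann--Lebesgue lemma for Fourier coefficients of \(L^1\) functions on \(\mathbb{T}^n\) gives \(\widehat{f}(j) \to 0\) as \(\|j\| \to \infty\). Since \(k \neq 0\) forces \(\|mk\| \to \infty\), this yields the required vanishing.

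I do not expect any serious obstacle: once the torus/Fourier framework is in place the argument is essentially one application of Riemann--Lebesgue. The only point that needs any care is the justification that pointwise convergence of Fourier coefficients against the Stone--Weierstrass generating set upgrades to weak convergence against arbitrary continuous test functions; this is a standard fact on compact groups (tightness is automatic on the compact space \(\mathbb{T}^n\), and uniform density of trigonometric polynomials in \(C(\mathbb{T}^n)\) handles the test-function approximation). The absolute-continuity hypothesis is clearly essential here, since without it \(\widehat{\nu}(mk)\) can fail to decay along arithmetic progressions (consider a point mass).
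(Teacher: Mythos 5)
Your proposal is correct and matches the paper's first (Fourier-analytic) proof essentially step for step: the identity $\widehat{\nu^{(m)}}(k)=\widehat{\nu}(mk)$ is the paper's Lemma~\ref{fourierlemma}, the vanishing for $k\neq 0$ via Riemann--Lebesgue and the appeal to the method of moments/Fourier criterion for weak convergence on the compact torus are exactly the steps in the paper's proof of Theorem~\ref{torus1}. The only cosmetic difference is that you spell out the Stone--Weierstrass justification for passing from characters to all of $C(\mathbb{T}^n)$, where the paper instead cites a reference.
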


\begin{remark}\label{thmnotobv1}
Phrased another way, Theorem \ref{torus1} states that \((mX_1, \ldots , mX_n)\) converges in distribution to \((Y_1, \ldots , Y_n)\), where \(\{Y_j\}_{j=1}^n\) are iid uniform random variables on \([0,2\pi)\). Raising elements of \(\mathbb{S}^1\) to high powers will effectively spin them around \(\mathbb{S}^1\), and one might see how spinning a single random value would give rise to a uniform distribution on the circle, but it might be less obvious why independence occurs in the limit when several random values are spun at once. The key is to consider how the described spinning affects the joint distribution.
\end{remark}

We will provide two proofs of Theorem \ref{torus1}. In Section 2.1, we use Fourier analysis, while in Section 2.2, we will use elementary measure theory, which provides a more physical understanding how the distribution \(\nu^{(m)}\) changes in \(m\). Section 2.3 will apply this result to prove Theorem \ref{mainevalthm}.

\subsection{Fourier analysis approach}

Given a measure \(\alpha\) on \([0,2\pi)^n\) (or an element \(f \in L^1([0,2\pi)^n)\)), we let \(\widehat{\alpha}\) (resp. \(\widehat{f}\)) denote its Fourier transform. We first prove the following general result.

\begin{lemma}\label{fourierlemma}
For \(\nu\), and \(\nu^{(m)}\) as described above, and for any \((p_1, \ldots , p_n) \in \mathbb{Z}^n\),
\[\widehat{\nu^{(m)}}(p_1, \ldots , p_n) = \widehat{\nu}(m p_1, \ldots , mp_n) \, .\]
\end{lemma}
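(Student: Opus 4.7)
The plan is to compute $\widehat{\nu^{(m)}}$ directly from the definition of the Fourier transform as an expectation, and exploit the fact that the reduction modulo $2\pi$ built into the definition of $mX_j$ is invisible to the exponential characters $e^{-ip_j y}$ when $p_j$ is an integer.

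More concretely, I would first write
\[
\widehat{\nu^{(m)}}(p_1, \ldots, p_n) \;=\; \int_{[0,2\pi)^n} e^{-i(p_1 y_1 + \cdots + p_n y_n)} \, d\nu^{(m)}(y_1,\ldots,y_n) \;=\; \mathbb{E}\!\left[\exp\!\Big(-i\sum_{j=1}^n p_j (mX_j \bmod 2\pi)\Big)\right].
\]
The next step is the one observation driving the lemma: for each $j$, since $p_j \in \mathbb{Z}$, we have $e^{-ip_j (mX_j \bmod 2\pi)} = e^{-ip_j \cdot mX_j}$, because the discrepancy is $e^{-2\pi i p_j k_j}=1$ for the integer $k_j$ recording how many copies of $2\pi$ were subtracted. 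Applying this coordinate by coordinate gives
\[
\widehat{\nu^{(m)}}(p_1,\ldots,p_n) \;=\; \mathbb{E}\!\left[\exp\!\Big(-i\sum_{j=1}^n (m p_j) X_j\Big)\right] \;=\; \widehat{\nu}(mp_1,\ldots,mp_n),
\]
where the last equality is just the definition of $\widehat{\nu}$ evaluated at the integer vector $(mp_1,\ldots,mp_n)$.

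There is no real obstacle here — the lemma is a one-line consequence of the definitions once one notices that integrality of the frequency variable kills the modular reduction. The only thing worth being careful about is consistency of the chosen Fourier convention (sign in the exponent, normalization), since the rest of Section 2.1 will presumably apply this identity together with a Fourier uniqueness/inversion argument to conclude Theorem \ref{torus1}; but both sides of the stated identity transform the same way under any standard convention, so no issue arises.
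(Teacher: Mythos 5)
Your proof is correct and follows essentially the same route as the paper: express $\widehat{\nu^{(m)}}$ as an expectation, use integrality of the $p_j$ to pass the factor $m$ from $X_j$ to the frequency, and recognize the result as $\widehat{\nu}(mp_1,\ldots,mp_n)$. If anything you are slightly more explicit than the paper in spelling out why the mod-$2\pi$ reduction in the definition of $mX_j$ is harmless, which is a worthwhile remark but the same argument.
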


\begin{proof}
\begin{align*}
\widehat{\nu^{(m)}}(p_1, \ldots , p_n) &\:=\: \int_{[0,2\pi)^n} \left(\prod_{1\leq j\leq n} e^{-ip_j \theta_j} \right) \nu^{(m)}(d \theta) \\
& \:=\: \mathbb{E} \left[ \prod_{1 \leq j \leq n} e^{-i p_j m X_j } \right] \\
& \:=\: \int_{[0,2\pi)^n} \left(\prod_{1 \leq j \leq n} e^{-i p_j m \theta_j} \right) \nu(d\theta) \:=\: \widehat{\nu}(m p_1, \ldots , m p_n) \, .
\end{align*}
\end{proof}

\begin{remark}\label{lem3p15}
This lemma rationalizes the stationary behavior exhibited in Rains' Theorem \ref{rains1} (and is explained well in \cite[Lemma 3.15]{eliz}). Indeed, suppose that \(\rho\) is the density of \(\nu\) with respect to \(\mu_{[0,2\pi)^n}\), and \(\rho^{(m)}\) is the density of \(\nu^{(m)}\). Then if \(\rho\) is the finite Fourier series
\[\rho(\vec{\theta}) \:=\: \sum_{\vec{p} \in \mathbb{Z}^n} a_{\vec{p}} \prod_{1 \leq j \leq n} e^{i p_j \theta_j}   \,,\]
then   \(\rho^{(m)}\) is also a finite Fourier series, but with fewer nonzero terms, namely
\[\rho^{(m)}(\vec{\theta}) \:=\: \sum_{\substack{\vec{p} \in \mathbb{Z}^n \\ m | p_j \: \forall j}} a_{\vec{p}}  \prod_{1 \leq j \leq n} e^{i \frac{p_j}{m} \theta_j}   \, .\]
Thus, if \(m\) is greater than \(\max\{p_j \::\: 1 \leq j \leq n \:,\: a_{\vec{p}} \neq 0\}\), then we see that the Fourier series of \(\rho^{(m)}\) will consist of merely the constant term 1.

This observation on its own would not be suitable to prove Theorem \ref{torus1}, as it does not account for infinite Fourier series or the type of convergence of such series. Instead, we use the following argument.
\end{remark}

\begin{proof}[Proof of Theorem \ref{torus1}.]
Let \(\rho\) be the \(L^1\) density of \(\nu\) with respect to \(\mu_{[0,2\pi)^n}\). By the Riemann-Lebesgue lemma, \(\widehat{\rho} \in c_0(\mathbb{Z}^n)\). In other words, for any sequence \((\vec{p}^{(\ell)})_\ell \subseteq \mathbb{Z}^n\) with \(\|\vec{p}^{(\ell)}\| \rightarrow \infty\), \(\widehat{\nu}(\vec{p}^{(\ell)}) = \widehat{\rho}(\vec{p}^{(\ell)}) \rightarrow 0\) as \(\ell \rightarrow \infty\). For fixed \(\vec{p} \in \mathbb{Z}^n \setminus \{\vec{0}\}\) by Lemma \ref{fourierlemma}, \(\widehat{\nu^{(m)}}(\vec{p}) = \widehat{\nu}(m \vec{p})\), and as \(m \rightarrow \infty\), \(\widehat{\nu}(m \vec{p}) \rightarrow 0\), while \(\widehat{\nu^{(m)}}(\vec{0}) = 1\) for all \(m\), so that ultimately \(\widehat{\nu^{(m)}}(\vec{p}) \rightarrow \delta_{\vec{0}}(\vec{p}) = \widehat{\mu_{[0,2\pi)^n}}(\vec{p})\). Since  convergence of the Fourier coefficients provides convergence of the measures (\cite[Theorem 4.2.5]{applebaum}, for example), \(\nu \rightarrow \mu_{[0,2\pi)^n}\) weakly.
\end{proof}

\subsection{Standard measure theory approach}

We now begin the second proof of Theorem \ref{torus1}, which relies on examining more closely how the distributions \(\nu^{(m)}\)  change as \(m\) increases. The transformation below describes how the density of \(\nu^{(m)}\) changes in \(m\) specifically for \(n = 1\).

\begin{lemma}\label{rlem}
For any \(m \in \mathbb{N}\), the transformation \(R^{(m)} : L^1([0,2\pi)) \rightarrow L^1([0,2\pi))\) defined as 
\[(R^{(m)} f) (x) \:=\: \frac{1}{m} \sum_{k =0}^{m-1} f \left( \frac{x + 2\pi k}{m} \right)\]
preserves the value of the integral, so that \(\int_{[0,2\pi)} R^{(m)} f = \int_{[0,2\pi)} f\), and \(R^{(m)}\) is a contraction. Furthermore, if \(\rho \in L^1([0,2\pi))\) is a probability density for \(X\), then \(R^{(m)} \rho\) is a probability density for \(mX\).
\end{lemma}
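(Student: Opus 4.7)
The plan is to verify the three assertions by a single change-of-variables observation, namely that the map $y \mapsto my \bmod 2\pi$ on $[0,2\pi)$ has exactly $m$ preimages of every point $x$, given by $y_k=(x+2\pi k)/m$ for $k=0,\dots,m-1$, and that each of these preimages comes from an affine rescaling of an interval of length $2\pi/m$ with Jacobian $1/m$. All three conclusions will fall out of this picture.

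First, for preservation of the integral, I would compute $\int_0^{2\pi}(R^{(m)}f)(x)\,dx$ by swapping the finite sum with the integral and then applying the substitution $y=(x+2\pi k)/m$ inside the $k$-th summand. This substitution carries $[0,2\pi)$ bijectively onto $[2\pi k/m,\,2\pi(k+1)/m)$, and the Jacobian factor $1/m$ cancels the $1/m$ prefactor in the definition of $R^{(m)}$. Summing over $k=0,\dots,m-1$ reassembles $[0,2\pi)$ and gives $\int_0^{2\pi}f(y)\,dy$, as desired. This also shows that $R^{(m)}$ maps $L^1$ into $L^1$.

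For the contraction property on $L^1$, I would combine integral preservation with the triangle inequality pointwise: $|(R^{(m)}f)(x)|\le (R^{(m)}|f|)(x)$, and then integrate to obtain $\|R^{(m)}f\|_1\le \|R^{(m)}|f|\|_1 = \||f|\|_1 = \|f\|_1$, where the middle equality uses the integral-preservation result applied to the nonnegative function $|f|$.

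Finally, for the density claim, I would show that for every Borel set $A\subseteq[0,2\pi)$,
\[
\mathbb{P}(mX\bmod 2\pi\in A) \;=\; \int_A (R^{(m)}\rho)(x)\,dx,
\]
by writing $\{mX\bmod 2\pi\in A\}=\{X\in\bigsqcup_{k=0}^{m-1}(A+2\pi k)/m\}$, which is a disjoint union because the sets $(A+2\pi k)/m$ lie in distinct slices $[2\pi k/m,\,2\pi(k+1)/m)$. Integrating $\rho$ over each piece and applying the same affine substitution as above produces the $k$-th summand of $(R^{(m)}\rho)(x)$, and summing recovers the formula. The main bookkeeping obstacle, and really the only subtlety, is tracking the indexing of the slices and confirming disjointness of the preimage decomposition; once that is set up carefully, each conclusion is a one-line consequence of the substitution.
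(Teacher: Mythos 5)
Your proposal is correct and follows essentially the same route as the paper: the same change of variables $y=(x+2\pi k)/m$ carrying $[0,2\pi)$ onto the $k$-th slice $[2\pi k/m, 2\pi(k+1)/m)$ for integral preservation, the same pointwise triangle inequality $|R^{(m)}f|\le R^{(m)}|f|$ combined with integral preservation for the contraction property, and the same disjoint decomposition of the preimage $\{X\in\bigsqcup_k (A+2\pi k)/m\}$ followed by the affine substitution for the density claim. No meaningful differences.
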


\begin{proof}
Using a change of variables, we may directly calculate that
\begin{align*}
\int_{[0,2\pi)} \frac{1}{m} \sum_{k=0}^{m-1} f\left(\frac{x+2\pi k}{m}\right)dx & \:=\: \sum_{k=0}^{m-1} \int_{[0,2\pi)} \frac{1}{m} f \left( \frac{x + 2\pi k}{m} \right)dx \\
& \:=\: \sum_{k=0}^{m-1} \int_{\left[2\pi \frac{k}{m}, 2\pi \frac{k+1}{m}\right)} f(u) du \\
& \:=\: \int_{[0,2\pi)} f(u) du \, .
\end{align*}

To show that \(R^{(m)}\) is a contraction map, observe that by the triangle inequality, \(|R^{(m)} f| \leq R^{(m)} |f|\), so that \(\int |R^{(m)} f| \leq \int R^{(m)} |f| = \int |f|\).

Lastly, for a measurable subset \(A \subseteq [0,2\pi)\), let \(m^{-1}(A)\) denote the inverse image of \(A\) under the map \(x \mapsto mx \: (\text{mod } 2\pi)\). Then it can be seen that
\begin{align*}
m^{-1}(A) &\:=\: \{x \in [0,2\pi) \::\: mx \in A\} \\
& \:=\: \frac{1}{m}A \cup \bigg(\frac{1}{m}A + \frac{2\pi}{m} \bigg) \cup \ldots \cup \bigg(\frac{1}{m}A + \frac{2\pi(m-1)}{m} \bigg) \, ,
\end{align*}
which is necessarily a disjoint union (finding these disjoint sets directly corresponds to finding \(m\)th roots on \(\mathbb{S}^1\)). Then
\[P(mX \in A) \:=\: P(X \in m^{-1}(A)) \:=\: \int_{m^{-1}(A)} \rho \:=\: \int_A R^{(m)} \rho \, ,\]
where the last equality follows again by a  change of variables. Thus, \(R^{(m)}\rho\) is a density of \(mX\).

\end{proof}

To offer some intuition for this proof of Theorem \ref{torus1}, for \(f \in L^1([0,2\pi))\), and for any \(x \in [0,2\pi)\), \(R^{(m)} \rho(\vec{x})\) can be thought of as an approximation of the Riemann integral of \(f\), even though \(f\) need not be Riemann integrable. This issue can be circumvented by approximating \(f\) by a  Riemann integrable function.

Recall that a simple function is a sum of characteristic functions \(\sum_{j=1}^k \chi_{E_j}\). A step function \(s : \mathbb{R}^n \rightarrow \mathbb{R}\) is a simple function in which the sets are rectangles, that is, \(E_j = [a_1, b_1] \times \ldots \times [a_n, b_n] \subseteq \mathbb{R}^n\) for \(a_j, b_j \in \mathbb{R}\). The following is a well-known lemma, a version of which can be found in \cite[Theorem 4.3]{stein} (while the cited theorem claims pointwise almost everywhere convergence, the construction indeed provides \(L^1\) convergence as well).

\begin{lemma}\label{steplem}
Any integrable function \([0,2\pi)^n \rightarrow \mathbb{R}\) can be approximated in \(L^1\) by a step function.
\end{lemma}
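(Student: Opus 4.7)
\emph{Proof plan.} The plan is to reduce the general case to approximating a single characteristic function, and then to exploit the regularity of Lebesgue measure. First, since simple functions are dense in $L^1([0,2\pi)^n)$ — this is built into the construction of the Lebesgue integral, via the standard truncation-and-discretization of the positive and negative parts of $f$ — I would fix $f \in L^1([0,2\pi)^n)$ and $\epsilon > 0$ and choose a simple function $\varphi = \sum_{j=1}^k c_j \chi_{E_j}$ with $\|f - \varphi\|_1 < \epsilon/2$, where each $E_j \subseteq [0,2\pi)^n$ has finite measure. By the triangle inequality and linearity, it then suffices to approximate each $\chi_{E_j}$ in $L^1$ by a step function, since a finite linear combination of step functions is again a step function.

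For such a measurable set $E$, I would invoke the outer regularity of Lebesgue measure to choose an open set $U \supseteq E$ with $\mu(U \setminus E) < \delta$, where $\delta$ is to be chosen small in terms of $\epsilon$ and the coefficients $c_j$. Every open subset of $\mathbb{R}^n$ admits a decomposition as a countable disjoint union of half-open dyadic cubes $U = \bigsqcup_{k \geq 1} Q_k$ (the standard Whitney-style or simple dyadic construction). The partial sum
\[ s_N \;:=\; \sum_{k=1}^N \chi_{Q_k \cap [0,2\pi)^n} \]
is a step function in the sense of the lemma, and $s_N \uparrow \chi_U$ pointwise, so by the monotone convergence theorem $\|s_N - \chi_U\|_1 \to 0$. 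Choosing $N$ sufficiently large and combining with the outer regularity estimate yields $\|s_N - \chi_E\|_1 < \epsilon/(k \max_j |c_j|)$ or similar, and summing gives the desired bound $\|f - s\|_1 < \epsilon$ for some step function $s$.

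The argument has no real obstacle; it is essentially the assembly of two standard ingredients (density of simple functions in $L^1$ and the dyadic decomposition of open sets). The only point worth care is that the dyadic cubes must be taken half-open, say of the form $[a_1,b_1) \times \cdots \times [a_n,b_n)$, so that the pieces are genuinely disjoint and the partial sums honestly qualify as step functions under the definition given in the paper. Since the cited reference \cite[Theorem 4.3]{stein} already carries out this construction for pointwise a.e. approximation, what is really being noted here is the easy observation that the same construction is bounded above by $|f|$ in absolute value, so that dominated convergence upgrades pointwise convergence to $L^1$ convergence.
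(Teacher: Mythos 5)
Your proof is correct and, as you note yourself, it reproduces the standard argument from the cited reference; the paper gives no proof beyond the citation to Stein--Shakarchi, so your route (density of simple functions, outer regularity, dyadic decomposition of the open set, monotone convergence to kill the tail) is the same one. The only inaccuracy is the closing remark: the step-function approximants in Stein's construction are not in general dominated by $|f|$ --- a step function approximating $\chi_E$ for a measurable set $E$ with empty interior cannot be nonzero while staying below $\chi_E$ --- so the pointwise-to-$L^1$ upgrade is not a dominated convergence argument applied to the original $f$. Your own proof does not need that claim: you control the $L^1$ error directly via monotone convergence for $s_N \uparrow \chi_U$ together with $\mu(U \setminus E) < \delta$, which is the correct mechanism and suffices.
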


With this lemma, we can now, once again, prove Theorem \ref{torus1}.


\begin{proof}[Second proof of Theorem \ref{torus1}.]
Let \(\rho : [0, 2\pi)^n \rightarrow \mathbb{R}\) be a (representative of the \(L^1\)-) density of \((X_1, \ldots , X_n)\) with respect to Lebesgue measure. Then for \(m \in \mathbb{N}\), consider the map \(R_n^{(m)} : L^1([0, 2\pi)^n) \rightarrow L^1([0, 2\pi)^n)\) 
\[(R^{(m)}_n f)(\vec{x}) \:=\: \frac{1}{m^n} \sum_{k_1=0}^{m-1} \cdots \sum_{k_n=0}^{m-1} f\left( \frac{x_1 + 2\pi k_1}{m} , \ldots , \frac{x_n + 2\pi k_n}{m} \right) \, , \]
Then note that Lemma \ref{rlem} generalizes to \(n\)-dimensions, so \(R^{(m)}_n \rho\) is the density of \((m X_1, \ldots , m X_n)\), and \(R^{(m)}_n\) is a contraction.  Using Lemma \ref{steplem}, approximate \(\rho\) in \(L^1\) by a step function \(s : [0, 2\pi)^n \rightarrow \mathbb{R}\), so that \(\|\rho - s\|_1 < \epsilon\). Then for any \(\vec{x}\), \((R^{(m)}_n s) (\vec{x})\) is an approximation of the Riemann integral of \(s\). Consequently, we have that, for every \(\vec{x}\), \((R^{(m)}_n s)(\vec{x}) \rightarrow \|s\|_1\). Since \(s\) is a step function, it is bounded, so \(s \leq c\) for some \(c \in \mathbb{R}\). But this implies that \(R^{(m)}_n s \leq c\) as well, so by the bounded convergence theorem, \(R^{(m)}_n s \rightarrow \|s\|_1\) in \(L^1\). Hence, we may choose \(m\) sufficiently large such that \(\big \|R^{(m)}_n s - \|s\|_1 \big\|_1 < \epsilon\).

Then we have
\begin{align*}
\big\|R^{(m)}_n \rho - \|\rho\|_1 \big\|_1 & \:\leq\: \big\| R_n^{(m)} \rho - R^{(m)}_n s \big\|_1 + \big \|R^{(m)}_n s - \|s\|_1 \big \|_1 + \big\| \|s\|_1 - \|\rho\|_1 \big\|_1  \\
& \:\leq\: \|\rho - s\|_1 + \big\|R^{(m)}_n s - \|s\|_1 \big\|_1 + \|\rho - s\|_1 \\
& \:<\: 3 \epsilon \, .
\end{align*}

Thus, \(R^{(m)}_n \rho\), the joint-density of \((mX_1, \ldots, mX_n)\), converges in \(L^1\) to the constant function \(\|\rho\|_1 = 1\).

The \(L^1\) convergence of the joint-densities is sufficient to conclude the convergence of the measures. This is because, for any measurable \(A \subseteq [0,2\pi)^n\),
\[\nu^{(m)}(A) \:=\: \int_A R^{(m)}_n \rho \:\xrightarrow{m \rightarrow \infty}\: \int_A 1 \:=\: \mu_{[0,2\pi)^n}(A) \, .\]
\end{proof}

\subsection{Eigenvalue distributions}

We can reinterpret Theorem \ref{torus1} for elements on a torus. For the rest of this section, fix a Lie group isomorphism \(z = (z_1, \ldots , z_n) : T \rightarrow (\mathbb{S}^1)^n\). Then we may view each \(z_j\) as a projection onto a copy of \(\mathbb{S}^1\). Then, by considering this map, along with equating \(\mathbb{S}^1\) with \([0,2\pi)\) and  including the observation in Remark \ref{lem3p15}, we may restate Theorem \ref{torus1} as the following.

\begin{theorem}\label{torus2}
If \(Z\) is a random element of any torus \(T \cong (\mathbb{S}^1)^n\) with distribution absolutely continuous with respect to \(\mu_T\), then \(Z^m \Rightarrow Y\), a random variable with distribution \(\mu_T\). Moreover, if the density of \(Z\) with respect to \(\mu_T\) consists of a finite Laurent polynomial in the \(z_j\)'s, then \(Z^m\) will be exactly distributed as \(\mu_T\) for \(m\) greater than the highest degree of any \(z_j\).
\end{theorem}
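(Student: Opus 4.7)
The plan is to transfer Theorem \ref{torus1} through the fixed Lie group isomorphism $z = (z_1, \ldots, z_n) : T \to (\mathbb{S}^1)^n$ and then use Remark \ref{lem3p15} to get the ``exactly distributed'' refinement. Concretely, let $X = (X_1, \ldots, X_n) \in [0,2\pi)^n$ be defined by $z_j(Z) = e^{i X_j}$. Since $z$ is a Lie group isomorphism, $\mu_T$ pushes forward to the normalized Lebesgue measure $\mu_{[0,2\pi)^n}$, and the hypothesis $\mathrm{Law}(Z) \ll \mu_T$ translates into $\mathrm{Law}(X) \ll \mu_{[0,2\pi)^n}$. Because raising $Z$ to the $m$-th power in $T$ becomes coordinate-wise multiplication by $m$ on the angle side (mod $2\pi$), the distribution of $z(Z^m)$ corresponds precisely to $\nu^{(m)}$ from Section 2.

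Given this translation, the first assertion follows directly: by Theorem \ref{torus1}, $\nu^{(m)} \Rightarrow \mu_{[0,2\pi)^n}$ weakly, which by the continuous mapping theorem (applied to $z^{-1}$, which is a homeomorphism) gives $Z^m \Rightarrow Y$ for $Y \sim \mu_T$.

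For the second assertion, suppose the density $\rho$ of $Z$ with respect to $\mu_T$ is a finite Laurent polynomial in the $z_j$'s. Under the identification $z_j = e^{i\theta_j}$, $\rho$ becomes a finite Fourier series on $[0,2\pi)^n$,
\[
\rho(\vec{\theta}) = \sum_{\vec{p} \in S} a_{\vec{p}} \prod_{1 \leq j \leq n} e^{i p_j \theta_j},
\]
where $S \subset \mathbb{Z}^n$ is a finite set and every $|p_j| \leq M$, with $M$ the highest degree of any $z_j$ appearing in $\rho$. By Remark \ref{lem3p15} (equivalently, Lemma \ref{fourierlemma} applied to the density), the density of $mX$ has Fourier coefficients supported only on those $\vec{p}$ for which $m\vec{p} \in S$. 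For $m > M$, the only such $\vec{p}$ is $\vec{0}$, leaving only the constant coefficient $a_{\vec{0}} = 1$. Hence the density of $mX$ is identically $1$, so $Z^m$ is exactly distributed as $\mu_T$.

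I do not expect any serious obstacle: the entire content is a translation along the isomorphism $z$, which preserves both the Haar measure structure and the power map in the way we need. The only minor care is to ensure that ``density as a finite Laurent polynomial in the $z_j$'s'' is unambiguously identified with ``finite Fourier series in the $\theta_j$'s'' so that the bound $M$ on the degree is the same object that controls the cutoff in Remark \ref{lem3p15}; this is immediate from $z_j^{p_j} = e^{i p_j \theta_j}$.
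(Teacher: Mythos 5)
Your proof is correct and follows the paper's own route: the paper derives Theorem~\ref{torus2} by exactly the same transfer along the isomorphism $z$ (reducing to Theorem~\ref{torus1} for the weak convergence) and by invoking Remark~\ref{lem3p15} for the finite-Laurent-polynomial refinement. You merely spell out the translation a bit more explicitly (the pushforward of $\mu_T$, the continuous mapping theorem for $z^{-1}$, and the degree-cutoff bookkeeping), which the paper leaves implicit.
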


\begin{remark}
This is enough information to already prove our eigenvalue result in the case of \(\mathbb{U}(N)\). If \(U\) has a distribution that is absolutely continuous with respect to \(\mu_{\mathbb{U}(N)}\), and if \((Z_1, \ldots , Z_N)\) are the eigenvalues of \(U\), then the distribution of \((Z_1, \ldots , Z_N)\) is absolutely continuous with respect to \(\mu_{(\mathbb{S}^1)^N}\), and eigenvalues of \(U^m\) are given as \((Z_1^m , \ldots , Z_N^m)\), which according to Theorem \ref{torus2} converges in distribution to a uniformly distributed random variable on \((\mathbb{S}^1)^N\). This is not sufficient for other groups. For example, the eigenvalues of elements in \(\mathbb{SO}(2n+1)\) are in complex-conjugate pairs, so eigenvalue distributions of random elements of \(\mathbb{SO}(2n+1)\) are not absolutely continuous with respect to \(\mu_{(\mathbb{S}^1)^{2n+1}}\). Instead, we need to investigate the ``degrees of freedom'' of the eigenvalues becoming uniformly distributed for large \(m\).
\end{remark}

For the following definition, recall that we let \(T\) be a maximal torus of \(L\), and the map \(\psi\) as defined in \eqref{psimap}.

\begin{definition}\label{def}
We say that the random pair \((U_{L/T}, U_T) \in L/T \times T\) is a \emph{random preimage of \(U\)} if \((U_{L/T}, U_T) \in \psi^{-1}(U)\) almost surely.
\end{definition}

\begin{example}\label{equirandpre}
For almost all \(x \in L\), \(\psi^{-1}(x)\) consists of \([N(T):T]\) elements. Provided that this is almost surely the case for \(\psi^{-1}(U)\) ,  by ignoring outcomes on a set of measure 0, we may define a random preimage \((U_{L/T}, U_T)\) by choosing uniformly amongst the \([N(T):T]\) preimages of \(U\) (and independently of \(U\)).
\end{example}

\begin{example}\label{detrandpre}
Suppose that there exists a region \(\mathcal{O} \subseteq T\) such that each set \(v \mathcal{O} v^{-1}\) for \(v \in N(T)\) is disjoint, and if \(U \in \psi(\bigcup_{v \in N(T)} v \mathcal{O} v^{-1})\) almost surely, then for almost all values of \(U\), we may choose \((U_{L/T}, U_T)\) such that \(U_T\) is almost surely in \(\mathcal{O}\). Thus, the preimage is almost surely determined by the value of \(U\).

For example, for \(L = \mathbb{U}(N)\), and \(T\) the subset of diagonal matrices, we may set
\[\mathcal{O} \:=\: \left\{ \left( \begin{array}{ccc} e^{i\theta_1} &  & 0 \\  & \ddots &  \\ 0 &  & e^{i\theta_N} \end{array} \right) \::\: 0 \leq \theta_1 < \ldots < \theta_N < 2\pi \right\} \:\subseteq\: T \, .\]
Then as long as the eigenvalues of \(U\) are almost surely distinct, we may define \((U_{L/T}, U_T)\) as the unique preimage of \(U\) for which \(U_T \in \mathcal{O}\).
\end{example}

For the remainder of this paper, let \((U_{L/T}, U_T)\) be any random preimage of \(U\). Note that \(\psi(U_{L/T}, U_T) = U\) almost surely, and \(U_T\) is a random element of \(T\) that is almost surely conjugate to \(U\).   As such, the distribution of the eigenvalues of \(U^m\) is equivalent to that of \((U_T)^m\). Hence, we can characterize the eigenvalues of high powers of \(U\) by characterizing high powers of \(U_T\).

As with Theorem \ref{torus2}, in order for the main results of this paper, Theorem \ref{mainevalthm} and Theorem \ref{lastthm}, to hold, we need a type of absolute continuity on the distribution of \(U\). Specifically, the criterion of interest is the absolute continuity of the distribution of \(U_T\) with respect to \(\mu_T\). Since \(U_T\) determines the eigenvalues of \(U\), and since \(T\) has the same degrees of freedom as the eigenvalues of elements in \(L\) (see Lemma \ref{toruspolys}), we may consider this a type of absolute continuity on the eigenvalue component of \(U\). However, this is \emph{not} the same as saying that the distribution of eigenvalues is absolutely continuous with respect to \(\mu_{(\mathbb{S}^1)^N}\).

There is a natural concern when dealing with this criterion, namely whether \(Law(U_T) \ll \mu_T\) depends on the choice of random preimage \((U_{L/T}, U_T)\). As it turns out, this property is independent of the choice of \((U_{L/T}, U_T)\). This result is convenient to show with extra tools, and thus is addressed in Theorem \ref{nodept} in section 4. At the moment, we can at least offer some assurance through Theorem \ref{torusabscty}.

\begin{theorem}\label{torusabscty}
If \(Law(U) \ll \mu_L\), then \(Law(U_T) \ll \mu_T\).
\end{theorem}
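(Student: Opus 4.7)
The plan is to show directly that any $\mu_T$-null subset $A \subseteq T$ satisfies $P(U_T \in A) = 0$. The observation that drives everything is that, because $(U_{L/T}, U_T) \in \psi^{-1}(U)$ almost surely, the element $U = \psi(U_{L/T}, U_T)$ is almost surely $L$-conjugate to $U_T$; hence the event $\{U_T \in A\}$ is contained, up to a null set, in $\{U \in \psi(L/T \times A)\}$. Since \(\text{Law}(U) \ll \mu_L\), it is enough to prove that $B := \psi(L/T \times A) \subseteq L$, which is simply the union of the $L$-conjugacy classes meeting $A$, satisfies $\mu_L(B) = 0$. We may assume $A$ is Borel (otherwise enlarge to a Borel $\mu_T$-null set), so that $B$ is the continuous image of a Borel set, hence analytic and universally measurable.

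The second step applies the Weyl integration formula,
\[
\mu_L(B) \:=\: \int_L \mathbf{1}_B \, d\mu_L \:=\: \frac{1}{[N(T):T]} \int_{L/T} \int_T \mathbf{1}_B(vtv^{-1}) |\Delta(t)|^2 \, d\mu_T(t) \, d\mu_{L/T}(vT),
\]
where $\Delta$ is the Weyl denominator. Since $vtv^{-1} \in B$ iff $t$ is $L$-conjugate to some element of $A$, and two elements of $T$ are $L$-conjugate exactly when they lie in a common $N(T)/T$-orbit, the inner integrand depends only on whether $t$ belongs to the finite union $W \cdot A := \bigcup_{w \in N(T)/T} wAw^{-1}$. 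Thus the computation reduces to bounding
\[
\mu_L(B) \:=\: \frac{1}{[N(T):T]} \int_T \mathbf{1}_{W\cdot A}(t) |\Delta(t)|^2 \, d\mu_T(t),
\]
and everything comes down to verifying that $\mu_T(W \cdot A) = 0$.

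The final step is immediate: conjugation by each $w \in N(T)/T$ is a Lie group automorphism of $T$, and the pushforward of Haar measure on a compact group under any automorphism is another translation-invariant probability measure, hence equals Haar measure by uniqueness; thus $\mu_T(wAw^{-1}) = \mu_T(A) = 0$, and finiteness of the Weyl group yields $\mu_T(W\cdot A) = 0$. Consequently $\mu_L(B) = 0$, which combined with $\text{Law}(U) \ll \mu_L$ gives $P(U_T \in A) \leq P(U \in B) = 0$. The only delicate point is the measurability bookkeeping in the first step — replacing $A$ by a Borel null superset and verifying that $B$ is $\mu_L$-measurable — but this is standard, and I do not anticipate any substantive obstacle.
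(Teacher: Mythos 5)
Your proof is correct, but it takes a genuinely different route from the paper's. The paper reduces the problem to a single general-purpose fact from differential topology: $\psi : L/T \times T \to L$ is a smooth map between manifolds of the same dimension (indeed $\dim(L/T)+\dim T = \dim L$), and any such map carries sets of measure zero to sets of measure zero (\cite[Theorem 6.9]{lee}); thus $B=\psi(L/T\times A)$ is $\mu_L$-null as soon as $A$ is $\mu_T$-null, and the rest follows as in your first and last steps. Your argument instead stays entirely inside the Lie-theoretic toolbox that the rest of the paper draws on: the Weyl integration formula writes $\mu_L$ as $\psi_*\bigl(\mu_{L/T}\otimes \tfrac{|\Delta|^2}{|W|}\mu_T\bigr)$, you identify $\psi^{-1}(B)\cap (L/T\times T)$ with $L/T \times (W\cdot A)$ via the theorem that two elements of $T$ are $L$-conjugate iff they are Weyl-conjugate, and then use that each $w\in N(T)/T$ acts on $T$ by a measure-preserving automorphism. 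The paper's proof is shorter and needs less structure (it never invokes the Weyl group or $\Delta$), so it would extend verbatim to a broader class of smooth surjections; your proof is more transparent about \emph{why} the null set stays null — because the pushforward is explicitly absolutely continuous with respect to $\mu_{L/T}\otimes\mu_T$ — and it forecasts the Weyl-group machinery the paper later develops in Section 4 (Lemma~\ref{teeprime}, Lemma~\ref{weylconv}). Both proofs have the same mild measurability bookkeeping (you enlarge $A$ to a Borel null set and use universal measurability of the analytic set $B$; the paper replaces $B$ by a Borel superset $B'$ of measure zero), and both handle it adequately. One small point worth making explicit in your write-up: you are using the nontrivial fact that $L$-conjugacy on $T$ coincides exactly with the Weyl-group action on $T$, not merely for regular elements; that equivalence is a genuine theorem and deserves a citation (e.g.\ \cite{hall}).
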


\begin{proof}
Suppose that \(A \subseteq T\) is such that \(\mu_T(A) = 0\). Then define \(B = \psi(L/T \times  A) = \{\ell a \ell^{-1} \;:\: \ell \in L, a \in A\}\). Then \((\mu_{L/T} \otimes \mu_T)(L/T \times A) = 0\). Since \(\psi\) is a smooth map between manifolds of the same dimension,  \(B\)  must have outer measure 0 (see \cite[Theorem 6.9]{lee}, noting that the concept of ``measure zero'' in that context corresponds to a set having outer measure 0 with respect to \(\mu_{L/T} \otimes \mu_T\) and \(\mu_L\)), so that \(B \subseteq B'\) for some Borel measurable \(B'\) with \(\mu_L(B') = 0\). Since \(U_T\) is conjugate to \(U\), we may say \(U_T \in A\) implies that \(U \in B\) (and \(U \subseteq B'\)), so that
\[P(U_T \in A) \:\leq\: P(U \in B') \, .\]
By the absolute continuity of the distribution of \(U\), the right-hand side of the above must be 0, so that the left-hand side is also 0, which proves the desired absolute continuity.
\end{proof}

We now state a simplified version of \cite[Lemma 2.6]{rains}, which highlights the connection between \(T\) and eigenvalues of elements in \(L\).

\begin{lemma}\label{toruspolys}
Let \(L\) be a compact Lie group with maximal torus \(T\) of dimension \(n\) and unitary representation \(\phi: L \rightarrow GL(V)\) with \(\text{dim}(V) = N\). Then there exists a set of monomials \(\{p_j\}_{1 \leq j \leq N}\) such that the eigenvalues of an element \(t \in T\) are given as
\[\Big\{p_j\big(z_1(t), \ldots, z_n(t)\big)\Big\}_{1 \leq j \leq N} \, .\]
\end{lemma}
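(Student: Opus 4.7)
The plan is to exploit the fact that $\phi$ restricted to $T$ is a unitary representation of a compact abelian group, so it simultaneously diagonalizes, and then to identify the diagonal entries as characters of $T$, which for a torus are precisely monomials in the coordinates $z_j$.

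First, I would consider the restriction $\phi|_T : T \to GL(V) \subseteq U(V)$, where we use the unitarity of $\phi$. Since $T$ is abelian, $\{\phi(t) : t \in T\}$ is a commuting family of unitary operators on $V$, so by the spectral theorem there is an orthonormal basis $\{v_1, \ldots, v_N\}$ of $V$ that simultaneously diagonalizes all $\phi(t)$. In this basis, $\phi(t) v_j = d_j(t) v_j$ for scalars $d_j(t) \in \mathbb{S}^1$, and the map $d_j : T \to \mathbb{S}^1$ is a continuous group homomorphism (i.e., a character of $T$) because $\phi|_T$ is a continuous homomorphism. By construction, the multiset $\{d_1(t), \ldots, d_N(t)\}$ is exactly the eigenvalue multiset of $\phi(t)$.

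Next, I would identify these characters. Under the isomorphism $z = (z_1, \ldots, z_n) : T \to (\mathbb{S}^1)^n$, each $d_j$ pulls back to a continuous homomorphism $(\mathbb{S}^1)^n \to \mathbb{S}^1$. It is a standard fact (Pontryagin duality for the $n$-torus) that every such homomorphism has the form $(w_1, \ldots, w_n) \mapsto w_1^{k_{j,1}} \cdots w_n^{k_{j,n}}$ for a unique tuple $(k_{j,1}, \ldots, k_{j,n}) \in \mathbb{Z}^n$. Setting $p_j(w_1, \ldots, w_n) := w_1^{k_{j,1}} \cdots w_n^{k_{j,n}}$ gives monomials with the property that $d_j(t) = p_j(z_1(t), \ldots, z_n(t))$ for all $t \in T$, which is the desired conclusion.

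There is essentially no obstacle here beyond invoking these two classical facts (simultaneous diagonalizability of a commuting unitary family, and the description of continuous characters of $(\mathbb{S}^1)^n$); the statement is really a packaging of the weight-space decomposition of the representation $\phi|_T$ in a form suitable for the rest of the paper. The only minor care needed is the choice of integer exponents: the continuity of $d_j$ forces the exponents to be integers (not merely real), which is what ensures that $p_j$ is a genuine Laurent monomial.
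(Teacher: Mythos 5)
Your proof is correct and follows essentially the same approach as the paper: the paper decomposes $\phi|_T$ (transported to $(\mathbb{S}^1)^n$ via $z$) into irreducible one-dimensional subrepresentations and identifies each as a Laurent monomial character, which is precisely what your simultaneous diagonalization plus Pontryagin duality argument accomplishes.
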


\begin{proof}[Proof (sketch).]
Any irreducible unitary representation of \((\mathbb{S}^1)^n\) can be realized as a homomorphism \((\mathbb{S}^1)^n \rightarrow \mathbb{S}^1\), which is necessarily a Laurent monomial in each coordinate, that is, a map of the form \((x_1, \ldots, x_n) \mapsto x_1^{k_1} \ldots x_n^{k_n}\). Then for any unitary representation \(\phi : T \rightarrow GL(V)\), we may define a unitary representation \(p : (\mathbb{S}^1)^n \rightarrow GL(V)\) as \(p = \phi \circ z^{-1}\). Then \(p\) is a product of irreducible representations, which implies that \(p\) can be realized as a homomorphism \(p = (p_1, \ldots , p_N)  : (\mathbb{S}^1)^n \rightarrow (\mathbb{S}^1)^N\). Then each \(p_j\) is a Laurent monomial, and the eigenvalues of \(p(x_1, \ldots, x_n)\) are given as the set \(\{p_j(x_1, \ldots, x_n)\}_{1 \leq j \leq N}\). Thus, for any \(t \in T\), the eigenvalues of \(\phi(t) = p(z(t))\) are given as \(\{p_j(z_1(t), \ldots , z_n(t))\}_{1 \leq j \leq N}\).
\end{proof}

The monomials described in Lemma \ref{toruspolys} are precisely the monomials that come into Theorem \ref{rains1}. For example, the monomials for \(\mathbb{SU}(N)\) can be realized as \(p_j = z_j\) for \(1 \leq j \leq N-1\), and \(p_N = \overline{z_1 \ldots z_{N-1}}\).

We may now put all of our accumulated facts together into a proof of Theorem \ref{mainevalthm}.

\begin{proof}[Proof of Theorem \ref{mainevalthm}]

Define \(Z_j = z_j(U_T)\) and \(Z = (Z_1, \ldots , Z_n) = (z_1(U_T), \ldots , z_n(U_T))\). By the absolute continuity of the distribution of \(U_T\) with respect to \(\mu_T\), since \(z\) is a diffeomorphism, we may deduce that the distribution of \(Z\) is absolutely continuous with respect to \(\mu_{(S^1)^n}\). Then take Laurent monomials \(\{p_j\}_{1 \leq j \leq N}\) as in Lemma \ref{toruspolys} and \(p = (p_1, \ldots , p_N)\),

If \(Y \in (\mathbb{S}^1)^n\) is distributed as \(\mu_{(\mathbb{S}^1)^n}\), then by Theorem \ref{torus2}, \(Z^m \Rightarrow Y\), so  \(p(Z^m) \Rightarrow p(Y)\). Or, written more explicitly,
\[(p_1(Z_1^m, \ldots , Z_n^m), \ldots, p_N(Z_1^m, \ldots, Z_n^m)) \Rightarrow (p_1(Y_1, \ldots , Y_n) , \ldots , p_N(Y_1, \ldots , Y_n)) \, .\]
Therefore, the eigenvalue distribution of \((U_T)^m\), which is equivalent to the eigenvalue distribution of \(U^m\), converges weakly to the eigenvalue distribution of \(H_L^D\).

To prove the second part of the theorem,  we use the second part of Theorem \ref{torus2}, which tells us that if \(m\) is greater than the highest degree of any \(z_j\) in the density of \(U_T\), then \(Z^m\) is distributed as \(Y\), for which we will see \(p(Z^m)\) is distributed as \(p(Y)\), so the eigenvalue distribution of \(U^m\) will be distributed as that of \(H_L^D\).
\end{proof}

\section{Limiting distributions of powers of group elements}

This section considers the distribution of powers of a random Lie group element itself, which results in statements stronger than those regarding distributions of eigenvalues. There is one result of this nature that we may already deduce: if the distribution of \(U\) conjugate-invariant and \(Law(U) \ll \mu_L\), then \(U^m\) converges in distribution to \(H_L^D\). To offer a brief justification, if \(U\) has conjugate-invariant distribution, then the distribution of \(U\) is determined by the distribution of the eigenvalues, as can be said of \(U^m\), as well as its limit in \(m\). Hence, if the eigenvalues achieve convergence in distribution, so must \(U^m\), and it must converge to \(H_L^D\). A rigorous justification of this fact will follow from the more general result of this section, Theorem \ref{lastthm}.

Recall the definition of random preimage in Definition \ref{def}. While the previous section and the discussion above focused on \(U_T\), our results of this section will require the distribution of \(U_{L/T}\) to be considered.  Lemma \ref{lastlem}, stated on its own for the sake of discussion, hints at the connection between \(U_{L/T}\) and the limiting distribution of \(U^m\).

\begin{lemma}\label{lastlem}
\((U_{L/T}, U_T^m)\) is a random preimage of \(U^m\).
\end{lemma}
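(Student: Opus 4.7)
The plan is essentially a one-line unpacking of the definitions, made legitimate by the fact that the torus is abelian. First I would recall that $(U_{L/T}, U_T)$ being a random preimage of $U$ means, by Definition \ref{def}, that $\psi(U_{L/T}, U_T) = U$ almost surely. Pick (measurably, on the event of full measure where the identity holds) a representative $V \in L$ with $VT = U_{L/T}$, so that $U = V U_T V^{-1}$ a.s.

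Next I would raise both sides to the $m$-th power. Because conjugation is a group homomorphism, $U^m = (V U_T V^{-1})^m = V U_T^m V^{-1}$ a.s. Since $T$ is a subgroup of $L$, the element $U_T^m$ again lies in $T$, so the pair $(U_{L/T}, U_T^m)$ indeed lives in $L/T \times T$, and the right-hand side is, by definition, $\psi(U_{L/T}, U_T^m)$.

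The one thing that needs checking to make this rigorous is that $\psi(vT, t) = vtv^{-1}$ is well-defined on $L/T \times T$: if $V' = Vt_0$ with $t_0 \in T$, then $V' U_T^m (V')^{-1} = V t_0 U_T^m t_0^{-1} V^{-1} = V U_T^m V^{-1}$, using commutativity of $T$. This shows that the expression $\psi(U_{L/T}, U_T^m)$ does not depend on the choice of representative $V$, so the identity $\psi(U_{L/T}, U_T^m) = U^m$ holds almost surely. Since this is exactly the condition $(U_{L/T}, U_T^m) \in \psi^{-1}(U^m)$ a.s., the lemma follows.

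I do not expect any real obstacle; the result is essentially a bookkeeping statement that uses only the abelian nature of $T$ and the fact that $(U_{L/T}, U_T)$ already satisfies $\psi(U_{L/T}, U_T) = U$ a.s. Its significance, which will presumably drive the proof of Theorem \ref{lastthm}, lies in the fact that the $L/T$-component of a random preimage is preserved under the power map, while the torus component is simply raised to the power $m$, so Theorem \ref{torus2} can be applied to the torus component alone.
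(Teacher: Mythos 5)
Your proposal is correct and is essentially the paper's own argument: both rest on the observation that $\psi(aT,t)^m = (vtv^{-1})^m = vt^mv^{-1} = \psi(aT,t^m)$, applied pointwise to the a.s.\ identity $\psi(U_{L/T},U_T)=U$. The extra check that $\psi$ is well-defined on cosets is a fine but routine aside already implicit in the paper's setup.
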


\begin{proof}
Note that, for any \(a \in L/T\) and \(b \in T\), \(\psi(a, b)^m = \psi(a, b^m)\). Then \(\psi(U_{L/T}, U_T^m) = \psi(U_{L/T}, U_T)^m = U^m\) almost surely.
\end{proof}

\begin{remark}\label{thmnotobv2}
Based on this lemma and Theorem \ref{torus2}, it may be easy to think that \(U^m \sim \psi(U_{L/T}, U_T^m)\) converges in distribution to \(\psi(U_{L/T}, Y)\), where \(Y\) is uniform on \(T\) and independent of \(U_{L/T}\). However, Lemma \ref{lastlem} on its own does not imply this, neither guaranteeing independence in the limit,  nor proving that such a limit exists in the first place. Nevertheless, independence will be achieved in the limit, as stated in the last main result, Theorem \ref{lastthm}, but as with the independence in Theorem \ref{torus1}, it requires more careful reasoning to show. For this result, we will once again use the condition \(Law(U_T) \ll \mu_T\), and in particular we do not require \(Law(U) \ll \mu_L\), which will be highlighted in the example to follow. Possibly more surprising is that the statement of Theorem \ref{lastthm} implies that the distribution of \(\psi(U_{L/T}, Y)\) is the same regardless of choice of random preimage \((U_{L/T}, U_T)\). Indeed this can be proven true independent of the methods used here, and is addressed in detail in section 4, Theorem \ref{nodepl}.
\end{remark}

\begin{proof}[Proof of Theorem \ref{lastthm}]

We will show this by first showing that \((U_{L/T},U_T) \Rightarrow (U_{L/T},Y)\). Fix a measurable set \(A \subseteq L/T\). Suppose that \(P(U_{L/T} \in A) > 0\). Then consider  \(\frac{P((U_{L/T},U_T) \in A \times \cdot)}{P(U_{L/T} \in A)}\) as a measure on \(T\). In fact, this is the probability distribution of \(U_T\) conditioned on \(U_{L/T} \in A\), which we will define as \(V_A\). Or more precisely, define \(\Omega' = \{U_{L/T} \in A\}\) and \(P' = \frac{P((U_{L/T} \in A) \cap \cdot)}{P(U_{L/T} \in A)}\), so that \((\Omega', P')\) is a probability space, and then define \(V_A := U_T|_{\Omega'}\). It should be noted that \(V_A\) must have distribution absolutely continuous with respect to \(\mu_T\), because if \(B \subseteq T\) where \(\mu_T(B) = 0\), then 
\[P(V_A \in B) \cdot P(U_{L/T} \in A) \:=\: P((U_{L/T},U_T) \in A \times B) \:\leq\: P(U_T \in B) \:=\: 0 \, .\]
where the last equality follows from the absolute continuity of the distribution of \(U_T\) with respect to \(\mu_T\). Thus, \(P(V_A \in B) = 0\), proving the absolute continuity of the distribution of \(V_A\) with respect to \(\mu_T\). Next, note that the distribution of \((V_A)^m\) is given as \(\frac{P((U_{L/T}, (U_T)^m) \in A \times \cdot)}{P(U_{L/T} \in A)}\). If we define \(Y\) to be uniform on \(T\) and independent of \(U_{L/T}\), then using Theorem \ref{torus2}, \((V_A)^m\) converges in distribution to \(Y\), so for any measurable \(B \subseteq T\), \(\frac{P((U_{L/T}, (U_T)^m) \in A \times B)}{P(U_{L/T} \in A)} \rightarrow P(Y \in B)\), or in other words,
\begin{equation}\label{probconv}
P((U_{L/T}, (U_T)^m) \in A \times B) \rightarrow P(U_{L/T} \in A) P(Y \in B) = P((U_{L/T}, Y) \in A \times B) \, .
\end{equation}

If \(P(U_{L/T} \in A) = 0\), then \(P((U_{L/T}, (U_T)^m) \in A \times B) \leq P(U_{L/T} \in A)\) implies that the convergence in \eqref{probconv} trivially occurs, so it must occur for all measurable \(A\). Since \(\{A \times B : A \subseteq L/T, B \subseteq T \text{ measurable}\}\) is a \(\pi\)-system that generates the Borel \(\sigma\)-algebra of \(L/T \times T\), the convergence in \eqref{probconv} occurs for all measurable subsets of \(L/T \times T\). Hence,  \((U_{L/T}, (U_T)^m)\) converges to \((U_{L/T}, Y)\) in distribution. By Lemma \ref{lastlem}, \(U^m \sim \psi(U_{L/T}, (U_T)^m)\), which must converge in distribution to \(\psi(U_{L/T} , Y)\).

\end{proof}

We now prove the aforementioned corollary.

\begin{corollary}\label{cor1}
If the distribution of \(U\) is conjugate-invariant and absolutely continuous with respect to \(\mu_L\), then \(U^m\) converges in distribution to \(H_L^D\).
\end{corollary}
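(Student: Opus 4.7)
The plan is to apply Theorem \ref{lastthm} to reduce the statement to identifying the limiting distribution, then to match that limit with \(H_L^D\) using conjugate invariance and the Weyl integration formula.

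By Theorem \ref{torusabscty}, \(Law(U) \ll \mu_L\) implies \(Law(U_T) \ll \mu_T\) for any random preimage of \(U\), so Theorem \ref{lastthm} yields \(U^m \Rightarrow \psi(U_{L/T}, Y)\) with \(Y \sim \mu_T\) independent of \(U_{L/T}\). The task reduces to showing \(\psi(U_{L/T}, Y) \stackrel{d}{=} H_L^D\).

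I would first verify that the limit is itself conjugate invariant on \(L\). For any fixed \(g \in L\), the identity \(g\,\psi(aT, b)\,g^{-1} = \psi(g \cdot aT, b)\) combined with the continuous mapping theorem gives \(gU^mg^{-1} \Rightarrow \psi(g \cdot U_{L/T}, Y)\), while conjugate invariance of \(U\) gives \(gU^mg^{-1} \stackrel{d}{=} U^m \Rightarrow \psi(U_{L/T}, Y)\). Uniqueness of weak limits then forces \(\psi(g \cdot U_{L/T}, Y) \stackrel{d}{=} \psi(U_{L/T}, Y)\) for every \(g \in L\). Next I would identify the eigenvalue distribution of the limit: since \(\psi(a,b) = aba^{-1}\) is \(L\)-conjugate to \(b\), the eigenvalues of \(\phi(\psi(U_{L/T}, Y))\) in any unitary representation \(\phi\) agree in distribution with those of \(\phi(Y)\); since \(Y \sim \mu_T\) makes \((z_1(Y),\ldots,z_n(Y))\) uniform on \((\mathbb{S}^1)^n\), Lemma \ref{toruspolys} shows these eigenvalues are distributed as \((p_j(Y_1,\ldots,Y_n))_{1 \leq j \leq N}\) with iid uniform \(Y_j\), matching the eigenvalue distribution of \(\phi(H_L^D)\) by Theorem \ref{rains1}.

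Both \(\psi(U_{L/T}, Y)\) and \(H_L^D\) are thus conjugate-invariant random elements of \(L\) with the same eigenvalue distribution in any faithful unitary representation. By the Weyl integration formula, conjugate-invariant probability measures on \(L\) correspond bijectively to Weyl-invariant probability measures on \(T\), so they are determined by their conjugacy-class pushforward, hence by their eigenvalues. Therefore \(\psi(U_{L/T}, Y) \stackrel{d}{=} H_L^D\), and \(U^m \Rightarrow H_L^D\). The main technical step is establishing the conjugate invariance of the limit, which rests on the uniqueness of weak limits coupled with the continuous mapping theorem; once this is in hand, the remaining identification is a standard consequence of Rains' theorem and Weyl integration.
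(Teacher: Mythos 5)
Your approach is genuinely different from the paper's. The paper pins down the law of \(U_{L/T}\) directly: taking a uniform random preimage, it observes that conjugating \(U\) by \(g\) corresponds to left-translating \(U_{L/T}\), so conjugate invariance of \(Law(U)\) forces \(g U_{L/T} \sim U_{L/T}\) for all \(g\) and hence \(Law(U_{L/T}) = \mu_{L/T}\); at that point \(\psi(U_{L/T}, Y)\) is manifestly distributed as \(H_L^D\). You instead establish conjugate invariance of the \emph{limit} \(\psi(U_{L/T}, Y)\) via the continuous mapping theorem and uniqueness of weak limits --- a neat move that avoids identifying \(Law(U_{L/T})\) at all --- and then attempt to identify a conjugate-invariant law by its eigenvalue distribution.

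The gap is in your final identification step, ``conjugate-invariant probability measures on \(L\) correspond bijectively to Weyl-invariant probability measures on \(T\), so they are determined by their conjugacy-class pushforward, hence by their eigenvalues.'' The first part is correct: a conjugate-invariant measure on connected compact \(L\) is determined by its Weyl-invariant pushforward to \(T\). But the jump ``hence by their eigenvalues'' is not justified: the eigenvalue multiset under a single unitary representation \(\phi\), even a faithful one, need not determine the Weyl orbit in \(T\). For instance, take \(L = T = (\mathbb{S}^1)^2\) with the standard diagonal representation; the Weyl group is trivial, so \((a,b)\) and \((b,a)\) lie in distinct conjugacy classes yet have identical eigenvalue multisets. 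Consequently two distinct conjugate-invariant laws on \(L\) can share the same eigenvalue distribution, so matching eigenvalue distributions alone does not force the laws to coincide.

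Fortunately your argument already contains the fix, and the eigenvalue detour is unnecessary. You observed that \(\psi(U_{L/T}, Y)\) is conjugate to \(Y\), so the \(T\)-representative of the limit is \(Y \sim \mu_T\). Likewise, if \((H_{L/T}, H_T)\) is a random preimage of the Haar element \(H_L\), then by Lemma \ref{lastlem} \((H_{L/T}, H_T^D)\) is a random preimage of \(H_L^D\); since the Weyl density of \(H_T\) is a finite Laurent polynomial, Theorem \ref{torus2} (via Remark \ref{lem3p15}) gives \(H_T^D \sim \mu_T\). Both conjugate-invariant random elements therefore have Weyl-invariant pushforward \(\mu_T\), and the bijection you cite then yields \(\psi(U_{L/T}, Y) \sim H_L^D\) directly. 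With that substitution the argument is complete and provides an alternative to the paper's proof that does not require computing \(Law(U_{L/T})\).
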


\begin{proof}
If \(U\) is conjugate-invariant, then for any \(g \in L\), \(gUg^{-1} \sim U\). Let \((U_{L/T}, U_T)\) be a uniform random preimage of \(U\) as described in Example \ref{equirandpre}. Then, almost surely, \(\psi(U_{L/T}, U_T) = U\) and \(\psi(g U_{L/T}, U_T) = gUg^{-1}\), so it can be seen that \((g U_{L/T}, U_T)\) is a uniform random preimage of \(g U g^{-1}\). The distribution of a uniform random preimage is determined by the distribution of the image, so \(g U g^{-1} \sim U\) implies that \((g U_{L/T}, U_T) \sim (U_{L/T}, U_T)\), and in particular \(g U_{L/T} \sim U_{L/T}\), which forces \(U_{L/T}\) to have \(\mu_{L/T}\) as its distribution by the uniqueness of Haar measure. By Theorem \ref{lastthm}, \(U^m\) must  converge in distribution to  \(\psi(U_{L/T}, Y)\), where \(Y\) is independent of \(U_{L/T} \sim \mu_{L/T}\), which is indeed the distribution of \(H_L^D\). 
\end{proof}

We end with an example, considering a random variable that does not have absolutely continuous distribution with respect to \(\mu_L\). In particular, this construction will be such that \(U_{L/T}\) consists of point masses. In the example, we will explicitly compute \(U_T\) and \(U_{L/T}\), using a uniform random preimage, as described in Example \ref{equirandpre}, and use it to compute the limiting distribution of \(U^m\).

\begin{example}\label{lastexample}
Consider \(L = \mathbb{U}(2)\), and \(T\) the set of diagonal matrices of \(\mathbb{U}(2)\). Let \(D_1\) and \(D_2\) be random elements of \(T\) with distributions absolutely continuous with respect to \(\mu_T\). Let \(X\) be a random variable, independent of \(D_1\) and \(D_2\), with \(P(X = 0) = P(X = 1) = \frac{1}{2}\). Define
\[a = \left( \begin{array}{cc} \sqrt{2}/2  &  -\sqrt{2}/2  \\ \sqrt{2}/2 &  \sqrt{2}/2 \end{array} \right)  \quad,\quad p = \left( \begin{array}{cc} 0  &  1  \\ 1 &  0 \end{array} \right) \, .\]
Note that \(ap\) swaps the 2 columns of \(a\), while  for \(t \in T\), 
\(ptp^{*}=ptp\) swaps the diagonal entries of \(t\).

Define \(U = X D_1 + (1-X) a D_2 a^*\). We will now compute the distribution of a uniform random preimage \((U_{L/T}, U_T)\).  Note that \(P(U \in T \cup aTa^*) = 1\). If \(t \in T\) and is a regular value of \(\psi\) (or in other words, if \(t\) has distinct diagonal entries), then letting \(e \in L\) denote the identity, then \(\psi^{-1}(t) = \{(eT, t), (pT, ptp)\}\) and \(\psi^{-1}(ata^*) = \{(aT, t) , (apT, ptp)\}\). If we let \(\widehat{X}\) be an independent random variable with \(P(\widehat{X}=0) = P(\widehat{X}=1) = \frac{1}{2}\), then \(U_T\) is distributed as
\[X \widehat{X} D_1 +  X (1-\widehat{X}) p D_1 p  + (1-X) \widehat{X} D_2 + (1-X)(1-\widehat{X}) p D_2 p \, .\]
Note that this has absolutely continuous distribution with respect to \(\mu_T\), which implies that Theorem \ref{lastthm} holds, so \(U^m\) converges in distribution to \(\psi(U_{L/T}, Y)\) where \(Y\) is independent and distributed as \(\mu_T\). To compute \(\psi(U_{L/T}, Y)\), we see that \(U_{L/T}\) satisfies 
\[P(U_{L/T} = eT) \:=\: P(U_{L/T} = pT) \:=\: P(U_{L/T} = aT) \:=\: P(U_{L/T} = apT) \:=\: \frac{1}{4} \, .\]
Then by noting that \(pYp \sim Y\), and thus \(\widehat{X} Y + (1-\widehat{X}) pYp \sim Y\), we have
\begin{align*}
\psi(U_{L/T}, Y) &\:\sim\: X \widehat{X} Y +  X (1-\widehat{X}) p Y p  + (1-X) \widehat{X} aYa^* + (1-X)(1-\widehat{X}) ap Y pa^* \\
& \:\sim\: XY + (1-X)aYa^* \, .
\end{align*}
\end{example}

Thus, we may explicitly describe limits of distributions of random elements of compact Lie groups with weaker conditions than absolute continuity with respect to \(\mu_L\).

\section{Regarding random preimages}

In stating our theorems, we chose to discuss arbitrary random preimages, from opposed to merely making a fixed choice, in order to offer freedom and generality in applying these methods; there are reasons that the preimages described in Examples \ref{equirandpre} and \ref{detrandpre} are both useful. However, the provided generality may cause further questions.

There are two facts that this section will address, demonstrating that the assumptions and consequences of our main results, Theorem \ref{mainevalthm} and Theorem \ref{lastthm}, do not depend on the choice of random preimage \((U_{L/T}, U_T)\). We will first show Theorem \ref{nodept}, which states that the property \(Law(U_T) \ll \mu_T\) does not depend on the choice of random preimage. Secondly, we will show Theorem \ref{nodepl}, which states that if \(Law(U_T) \ll \mu_T\), and if \(Y\) is distributed as \(\mu_T\) and independent of \(U_{L/T}\), then the distribution of \(\psi(U_{L/T}, Y)\) does not depend on the choice of random preimage. It should be mentioned that Theorem \ref{nodepl} is technically a consequence of Theorem \ref{lastthm}, even though Theorem \ref{lastthm} offers no intuition whatsoever as to why this is true. Thus, this section will offer a proof of this independent of the methods used in Theorem \ref{lastthm}.

In order to show these facts, it will be convenient to use an extra tool, namely the Weyl group, which will now be described based on \cite{applebaum} and \cite{hall}. We denote the normalizer of \(T\) in \(L\) as
\[N(T) = \{a \in L \::\: a t a^{-1} \in t \text{ for all } t \in T\} \, .\]
We call the group \(N(T)/T\) the \emph{Weyl group} of \(L\). Note that \([N(T):T]\) is always finite. The Weyl group acts on \(T\) by conjugation, that is, if \(vT \in N(T)/T\) and \(t \in T\), then \(vT \cdot t := v t v^{-1}\). We will also need the fact that the map \(T \rightarrow T\) defined as \(t \mapsto vT \cdot t \) is not only a diffeomorphism, but also preserves the measure \(\mu_T\) by the translation invariance of \(\mu_T\).

The Weyl group also acts on \(L/T\) via right multiplication, so for \(vT \in N(T)/T\) and \(aT \in L/T\), \(vT \cdot aT := aT v^{-1} = av^{-1} T\). Then we may define an action of \(N(T)/T\) on \(L/T \times T\), namely
\begin{equation}\label{weylaction}
vT \cdot (aT, t) \: :=\: (a v^{-1} T \:,\: v t v^{-1}) \, .
\end{equation}

This action will preserve the value of \(\psi\), so that if \(w \in N(T)/T\), then \(\psi(w \cdot (aT, t)) = \psi(aT, t)\). Phrased another way, if \((aT, t) \in \psi^{-1}(x)\), then \(w \cdot (aT, t) \in \psi^{-1}(x)\).  If we know \(x = \psi(aT, t)\), where \(t\) is an element of \(T'\) as described in Theorem \ref{teeprime}, then the Weyl group will act freely and transitively on \(\psi^{-1}(x)\). This result can be mostly reconstructed from results in \cite{hall}, specifically Proposition 11.4 and Lemma 11.26.

\begin{lemma}\label{teeprime}
There exists a Borel measurable set \(T' \subseteq T\) satisfying the following

\begin{enumerate}[(i)]
\item \(\mu_T(T') = 1\).
\item \(T'\) is invariant under the action of \(N(T)/T\) defined in \eqref{weylaction}
\item For all \(t \in T'\) and all nontrivial \(w \in N(T)/T\), \(w \cdot t \neq t\).
\item If \(t \in T'\), \(s \in T\) and \(\psi(aT, t) = \psi(bT, s)\), then there exists a unique \(w \in N(T)/T\) such that \(w \cdot (aT,t) = (bT,s)\), and \(s \in T'\).
\end{enumerate}
\end{lemma}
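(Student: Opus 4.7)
The plan is to take $T'$ to be the intersection of the set of \emph{regular} elements of $T$ (those $t$ with $Z_L(t)^0 = T$, where $Z_L(t)$ denotes the centralizer of $t$ in $L$) with the set of elements having trivial stabilizer under the Weyl action, and then verify (i)--(iv) from standard facts in the structure theory of compact Lie groups.

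More concretely, I would let
\[T^{\mathrm{reg}} = \{t \in T : Z_L(t)^0 = T\}, \qquad T^{\mathrm{free}} = \{t \in T : w \cdot t \neq t \text{ for all nontrivial } w \in N(T)/T\},\]
and set $T' = T^{\mathrm{reg}} \cap T^{\mathrm{free}}$, which is Borel (indeed $T^{\mathrm{reg}}$ is open). For (i), $T \setminus T^{\mathrm{reg}}$ is a finite union of kernels of the roots of $L$, each a proper closed subgroup of $T$, hence $\mu_T$-null (cf.\ Hall Prop.~11.4); and since the Weyl group acts faithfully on $T$, for each nontrivial $w$ the fixed set $\{t : w \cdot t = t\}$ is also a proper closed subgroup of $T$ and so $\mu_T$-null. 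Hence $\mu_T(T')=1$. Property (iii) holds by construction of $T^{\mathrm{free}}$. For (ii), invariance of $T^{\mathrm{reg}}$ follows from $Z_L(w \cdot t) = v Z_L(t) v^{-1}$ when $w = vT$, and invariance of $T^{\mathrm{free}}$ from the identity $\mathrm{Stab}(w \cdot t) = w \, \mathrm{Stab}(t) \, w^{-1}$.

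The substantive step is (iv). Given $\psi(aT, t) = \psi(bT, s)$ with $t \in T'$ and $s \in T$, set $c = b^{-1}a$, so that $ctc^{-1} = s \in T$. The key input (essentially Hall Lem.~11.26) is that conjugating a regular element of $T$ back into $T$ forces $c \in N(T)$: indeed, $Z_L(t)^0 = T$ gives $Z_L(s)^0 = cTc^{-1}$, and $s \in T$ with $T$ connected yields $T \subseteq Z_L(s)^0 = cTc^{-1}$; comparing dimensions of these two maximal tori forces $T = cTc^{-1}$. Setting $w = cT \in N(T)/T$ then gives $w \cdot (aT, t) = (ac^{-1}T, ctc^{-1}) = (bT, s)$, using $ac^{-1} = b$. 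Uniqueness of $w$ comes from $t \in T^{\mathrm{free}}$ (any competing $w'$ would give $(w^{-1}w') \cdot t = t$, hence $w' = w$), and $s = w \cdot t \in T'$ by the invariance established in (ii).

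The main (quite mild) obstacle is simply invoking the right structural fact — that any conjugator sending a regular element back into $T$ must normalize $T$. Everything else reduces to elementary observations about proper closed subgroups of $T$ carrying zero Haar measure, together with routine manipulation of the Weyl action defined in \eqref{weylaction}.
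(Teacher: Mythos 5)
Your proof is correct, but it follows a genuinely different route from the paper's. The paper takes $T'$ to be the set of \emph{topological generators}, $T' = \{t \in T : \overline{\langle t\rangle} = T\}$; this single condition simultaneously yields $\mu_T$-fullness (the complement is a countable union of lower-dimensional subtori, by the Kronecker-type fact \cite[Prop.~11.4]{hall}), Weyl-freeness (if $vtv^{-1}=t$ then $v$ centralizes $\overline{\langle t\rangle}=T$, hence $v\in T$), and, for (iv), the normalizing property via the one-line computation $T = \overline{\langle t\rangle} = c\,\overline{\langle s\rangle}\,c^{-1} \subseteq cTc^{-1}$ followed by maximality of $T$. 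You instead take $T' = T^{\mathrm{reg}} \cap T^{\mathrm{free}}$ and argue (iv) through the centralizer characterization $Z_L(s)^0 = cTc^{-1} \supseteq T$. Note that imposing $T^{\mathrm{free}}$ separately is genuinely necessary in your setup: $T^{\mathrm{reg}} \not\subseteq T^{\mathrm{free}}$ in general (e.g.\ the order-two element of the maximal torus of $\mathrm{SO}(3)$ is regular but fixed by the nontrivial Weyl element, since $Z_L(t)$ is disconnected), whereas the paper's $T'$ gets both properties for free from a single definition. Both choices work; the paper's $T'$ is strictly smaller and its verification is more elementary (no roots, no regular-element theory needed), while yours invokes more structure theory but aligns with the standard regular-element machinery. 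One small citation quibble: the fact that $T\setminus T^{\mathrm{reg}}$ is a finite union of root kernels is not what \cite[Prop.~11.4]{hall} says (that reference is the topological-generator statement); the root-kernel decomposition is standard but you should point to the right place.
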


\begin{proof}
If \(\overline{\langle t \rangle}\) is the closure of the subgroup generated by \(t\), then set \(T' = \{t \in T \::\: \overline{\langle t \rangle} = T\}\). Then \(T \setminus T'\) consists of countably-many subtori of dimension less than the dimension of \(T\). To see this, first take a Lie group isomorphism \(z = (z_1, \ldots , z_n) : T \rightarrow \mathbb{S}^n\), then observe that \(z(T')\) is the set of tuples \((e^{2\pi i \theta_1} , \ldots , e^{2\pi i \theta_n})\) such that \(1 , \theta_1 , \ldots , \theta_n\) are \(\mathbb{Q}\)-linearly independent (see \cite[Proposition 11.4]{hall}). Then  \(z(T) \setminus z(T')\) corresponds to a set of  tuples \((\theta_1, \ldots , \theta_n) \in [0,1)^n\) such that there exist \(q_0, \ldots , q_n \in \mathbb{Q}\) where \(q_0 + q_1 \theta_1 + \ldots + q_n \theta_n = 0\), meaning that this set of tuples in \([0,1)^n\) is a countable union of closed hyperplanes of smaller dimension. We may then deduce that \(z(T) \setminus z(T')\) is a union of countably-many subtori of smaller dimension, so the same can be said of \(T \setminus T'\). Therefore, \(T'\) is a measurable set and of full measure.

If \(t \in T'\), and if \(w = vT \in N(T)/T\), then
\[\overline{\langle w \cdot t \rangle} \:=\: \overline{\langle v t v^{-1} \rangle } \:=\: v \overline{\langle t \rangle } v^{-1} \:=\: v T v^{-1} \:=\: T \, ,\]
so \(w \cdot t \in T'\) as well. And if \(w \cdot t = t\), then \(v t v^{-1} = t\), and for any \(n \in \mathbb{Z}\), \(v t^n v^{-1} = t^n\). Then the diffeomorphism \(t \mapsto w \cdot t\) fixes \(\langle t \rangle\), a dense subset of \(T\), so it must fix all of \(T\). Equivalently, we can say that \(v\) commutes with every element of \(T\). By the maximality of \(T\), we must have \(v \in T\) (or else \(v\) and \(T\) would be contained in a torus larger than \(T\)), so that \(w = vT = eT\).

Lastly, if \(t \in T'\) and \(s \in T\) where \(\psi(aT, t) = \psi(bT, s)\), then \(ata^{-1} = bsb^{-1}\), and \(t = (a^{-1} b) s (a^{-1} b)^{-1}\), so that
\[T \:=\: \overline{\langle t \rangle } \:=\: (a^{-1} b) \overline{\langle s \rangle } (a^{-1} b)^{-1} \:\subseteq\: (a^{-1}b) T (a^{-1}b)^{-1} \, .\]
Since \((a^{-1}b) T (a^{-1}b)^{-1}\) is a torus, by the maximality of \(T\), we must have \(T = (a^{-1} b) T (a^{-1} b)^{-1}\), so that \(a^{-1} b \in N(T)\), implying that \((b^{-1} a) T \in N(T)/T\), and
\[(b^{-1} a) T \,\cdot\, (aT, t) \:=\: (a (a^{-1} b) T,  (b^{-1} a) t (a^{-1} b)) \:=\: (bT, s) \,.\]
It follows that \(s \in T'\).
\end{proof}

Lemma \ref{teeprime} and the commentary above tell us that the Weyl group action can often map one preimage of a fixed value \(x\) to another. The next result tells us that a \emph{random eleme}nt of the Weyl group can be used to convert between r\emph{andom preim}ages.

\begin{lemma}\label{weylconv}
If \((U_{L/T}^{(1)}, U_{T}^{(1)})\) and \((U_{L/T}^{(2)}, U_T^{(2)})\) are both random preimages of \(U\),  and if \(Law(U_T^{(1)}) \ll \mu_T\), then there exists a random element of the Weyl group \(W \in N(T)/T\) such that, almost surely,
\[W \cdot (U_{L/T}^{(1)},  U_T^{(1)}) = (U_{L/T}^{(2)}, U_T^{(2)}) \, .\]
\end{lemma}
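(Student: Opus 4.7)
The plan is to define $W$ pointwise by invoking the uniqueness assertion in Lemma \ref{teeprime}(iv) and then to verify that the resulting map is measurable. Since $\mu_T(T \setminus T') = 0$ by Lemma \ref{teeprime}(i), the hypothesis $Law(U_T^{(1)}) \ll \mu_T$ gives $U_T^{(1)} \in T'$ almost surely. On this full-measure event the identity $\psi(U_{L/T}^{(1)}, U_T^{(1)}) = U = \psi(U_{L/T}^{(2)}, U_T^{(2)})$ puts us squarely in the setting of Lemma \ref{teeprime}(iv), which furnishes a unique $w = w(\omega) \in N(T)/T$ satisfying
\[
w \cdot (U_{L/T}^{(1)}(\omega), U_T^{(1)}(\omega)) \;=\; (U_{L/T}^{(2)}(\omega), U_T^{(2)}(\omega)).
\]
I define $W(\omega)$ to be this $w(\omega)$, and $W(\omega) := eT$ on the null set where $U_T^{(1)} \notin T'$. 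The desired almost-sure identity then holds by construction.

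To see that $W$ is a genuine random variable, I use that $N(T)/T$ is a finite group, so the claim reduces to the measurability of each level set $\{W = w\}$. Modulo the null set, this level set equals
\[
\bigl\{\omega : w \cdot (U_{L/T}^{(1)}(\omega), U_T^{(1)}(\omega)) \;=\; (U_{L/T}^{(2)}(\omega), U_T^{(2)}(\omega))\bigr\},
\]
which is the preimage of the (closed) diagonal of $(L/T \times T)^2$ under the continuous-in-the-coordinates map
\[
\omega \;\mapsto\; \Bigl( w \cdot (U_{L/T}^{(1)}(\omega), U_T^{(1)}(\omega)), \; (U_{L/T}^{(2)}(\omega), U_T^{(2)}(\omega)) \Bigr).
\]
Since the Weyl action is smooth and the two random preimages are by assumption measurable $L/T \times T$-valued random variables, this event is Borel.

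The main obstacle is not the existence of $w(\omega)$ for each $\omega$, which is delivered directly by Lemma \ref{teeprime}(iv); rather it is the measurable selection of these elements across $\omega$. Freeness of the Weyl action on $T'$ (Lemma \ref{teeprime}(iii)) eliminates any choice ambiguity in $w(\omega)$, and finiteness of $N(T)/T$ reduces the measurability check to verifying finitely many Borel identities of the form displayed above. The absolute continuity hypothesis $Law(U_T^{(1)}) \ll \mu_T$ is used in exactly one place---to force $U_T^{(1)} \in T'$ almost surely so that Lemma \ref{teeprime}(iv) applies---and nowhere else in the argument.
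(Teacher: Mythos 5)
Your proof is correct and takes essentially the same route as the paper's: both use the full-measure set $T'$ from Lemma \ref{teeprime} together with absolute continuity to place $(U^{(1)}_{L/T},U^{(1)}_T)$ in $T'$ almost surely, invoke part (iv) for existence/uniqueness of $w$, and exploit freeness plus finiteness of $N(T)/T$ for the measurable selection. The only cosmetic difference is that the paper packages the measurability argument by building a measurable map $\omega$ on the disjoint union of the graphs $\mathfrak{P}_w \subseteq (L/T\times T')^2$ and composing, whereas you verify directly that each level set $\{W=w\}$ is Borel as the preimage of the closed diagonal; these are the same computation stated two ways.
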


\begin{proof}
Take \(T' \subseteq T\) as in Lemma \ref{teeprime}. Then for any \(w \in N(T)/T\), define 
\[\mathfrak{P}_w = \{((u, t), w \cdot (u,t)) \::\:  u \in L/T \:,\: t \in T'\} \:\subseteq\: (L/T \times T')^2 \,.\] Then each \(\mathfrak{P}_w\) is Borel measurable. Furthermore, if \(v, w \in N(T)/T\) are such that \(((aT,t), (bT,s)) \in \mathfrak{P}_w \cap \mathfrak{P}_v\), then \(w \cdot t = s = v \cdot t\), and \((v^{-1} w) \cdot t = t\), and since \(t \in T'\), we must have \(w = v\). Hence, the collection \(\{\mathfrak{P}_w\}_{w \in N(T)/T}\) is disjoint. Then set \(\mathfrak{P} = \bigcup_{w \in N(T)/T} \mathfrak{P}_w\), and define \(\omega : \mathfrak{P} \rightarrow N(T)/T\) as \(\omega((aT,t), w \cdot (aT, t)) = w\). Then for any \(w \in N(T)/T\), \(\omega|_{\mathfrak{P}_w} \equiv w\), so we see that \(\omega\) is well-defined and measurable. Then observe that, for any \((aT, t) \in L/T \times T'\) and \(w \in N(T)/T\),
\[\omega((aT,t), w\cdot (aT,t)) \cdot (aT,t) \:=\: w \cdot (aT,t) \, .\]

By the absolute continuity of \(U_T^{(1)}\), we have that \(P(U_T^{(1)} \in T') = 1\). Then almost surely \(\psi(U_{L/T}^{(1)} , U_T^{(1)}) = U = \psi(U_{L/T}^{(2)} , U_T^{(2)})\), so by Lemma \ref{teeprime}, \(P(U_T^{(2)} \in T')=1\), and \(\big( (U_{L/T}^{(1)} , U_T^{(1)}) , (U_{L/T}^{(2)} , U_T^{(2)}) \big) \in \mathfrak{P}\) almost surely, so that we may give the almost sure definition of \(W \in N(T)/T\) as \(W = \omega\big( (U_{L/T}^{(1)} , U_T^{(1)}) , (U_{L/T}^{(2)} , U_T^{(2)}) \big)\). Thus, we see that \(W \cdot (U_{L/T}^{(1)} , U_T^{(1)}) = (U_{L/T}^{(2)} , U_T^{(2)})\) almost surely.

\end{proof}

We now use Lemma \ref{weylconv} to immediately prove Theorems \ref{nodept} and \ref{nodepl} in succession.

\begin{theorem}\label{nodept}
If \((U_{L/T}^{(1)}, U_{T}^{(1)})\) and \((U_{L/T}^{(2)}, U_T^{(2)})\) are both random preimages of \(U\), then the distribution of \(U_T^{(1)}\) is absolutely continuous with respect to \(\mu_T\) if and only if the distribution of \(U_T^{(2)}\) is.
\end{theorem}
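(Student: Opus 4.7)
The plan is to reduce the statement to Lemma \ref{weylconv}, which does essentially all the work. Assume for one direction that $\mathrm{Law}(U_T^{(1)}) \ll \mu_T$. By Lemma \ref{weylconv}, there is a random Weyl element $W \in N(T)/T$ with $W \cdot (U_{L/T}^{(1)}, U_T^{(1)}) = (U_{L/T}^{(2)}, U_T^{(2)})$ almost surely, so in particular $U_T^{(2)} = W \cdot U_T^{(1)}$ a.s. The goal is then to show that any $\mu_T$-null set $B \subseteq T$ has $P(U_T^{(2)} \in B) = 0$.

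The key calculation is to decompose according to the finitely many values of $W$: for any Borel $B \subseteq T$ with $\mu_T(B) = 0$,
\[
P(U_T^{(2)} \in B) \;=\; \sum_{w \in N(T)/T} P(W = w,\ w \cdot U_T^{(1)} \in B) \;\leq\; \sum_{w \in N(T)/T} P\bigl(U_T^{(1)} \in w^{-1} \cdot B\bigr).
\]
Each individual Weyl action $t \mapsto w \cdot t = vtv^{-1}$ is a $\mu_T$-preserving diffeomorphism of $T$ (this is explicitly noted just before Lemma \ref{teeprime}), so $\mu_T(w^{-1} \cdot B) = \mu_T(B) = 0$; hence absolute continuity of $\mathrm{Law}(U_T^{(1)})$ makes every summand vanish. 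Since $[N(T):T] < \infty$, the sum itself is $0$, giving $\mathrm{Law}(U_T^{(2)}) \ll \mu_T$.

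For the converse direction I would simply swap the roles of the two preimages: starting instead from $\mathrm{Law}(U_T^{(2)}) \ll \mu_T$, Lemma \ref{weylconv} produces a (possibly different) random Weyl element $W'$ such that $W' \cdot (U_{L/T}^{(2)}, U_T^{(2)}) = (U_{L/T}^{(1)}, U_T^{(1)})$ almost surely, and the identical argument then yields $\mathrm{Law}(U_T^{(1)}) \ll \mu_T$.

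I don't anticipate any real obstacle here, since Lemma \ref{weylconv} has already supplied the crucial measurable conversion between random preimages; the remaining work is just a finite union bound using the measure-preserving property of the Weyl action. The only thing to watch out for is to ensure that the events $\{W = w\}$ and $w \cdot U_T^{(1)} \in B$ are handled as a measurable decomposition of the a.s. identity $U_T^{(2)} = W \cdot U_T^{(1)}$, which is automatic from the construction of $W$ as a measurable function in the proof of Lemma \ref{weylconv}.
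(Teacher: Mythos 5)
Your proof is correct and follows the same strategy as the paper's: invoke Lemma \ref{weylconv} to produce a random Weyl element $W$ converting one preimage to the other, decompose over the finitely many values of $W$, and use the fact that each Weyl action is a $\mu_T$-preserving diffeomorphism to kill every summand. The paper leaves the converse implicit while you spell it out by swapping roles (and correctly note that Lemma \ref{weylconv} can still be applied since its hypothesis is then supplied by $\mathrm{Law}(U_T^{(2)}) \ll \mu_T$), but this is a cosmetic difference only.
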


\begin{proof}
Suppose that \(U_T^{(1)}\) has distribution absolutely continuous with respect to \(\mu_T\). Then by Lemma \ref{weylconv}, there exists a random \(W \in N(T)/T\) such that \(W \cdot U_T^{(1)} = U_T^{(2)}\). Then if \(\mu_T(A) = 0\), then
\begin{align*}
P(U_T^{(2)} \in A) &\:=\: P(W \cdot U_T^{(1)} \in A) \\
& \:=\: \sum_{w \in N(T)/T} P(w \cdot U_T^{(1)} \in A \:,\: W = w) \\
& \:=\: \sum_{w \in N(T)/T} P(U_T^{(1)} \in w^{-1} \cdot A \:,\: W = w) \\
& \:\leq\: \sum_{w \in N(T)/T} P(U_T^{(1)} \in w^{-1} \cdot A) \:=\: 0 \, .
\end{align*}
This proves the absolute continuity of the distribution of \(U_T^{(2)}\) with respect to \(\mu_T\).
\end{proof}

\begin{theorem}\label{nodepl}
Suppose that \((U_{L/T}^{(1)}, U_{T}^{(1)})\) and \((U_{L/T}^{(2)}, U_T^{(2)})\) are both random preimages of \(U\), and \(U_T^{(1)}\) has absolutely continuous distribution with respect to \(\mu_T\), and \(Y\in T\) is distributed as \(\mu_T\) and is independent of \(U_{L/T}^{(1)}\) and \(U_{L/T}^{(2)}\). Then \(\psi(U_{L/T}^{(1)}, Y) \sim \psi(U_{L/T}^{(2)},Y)\). \\
\end{theorem}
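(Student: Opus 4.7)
The plan is to use Lemma \ref{weylconv} to relate the two random preimages via a random Weyl group element, and then to push the Weyl ambiguity from the \(L/T\) component over to \(Y\), where it is absorbed by the invariance of \(\mu_T\) under the Weyl action. First, since the distribution of \(\psi(U_{L/T}^{(j)}, Y)\) is determined solely by the joint law of \((U_{L/T}^{(j)}, Y)\), and by hypothesis this pair has independent coordinates, I may enlarge the probability space and replace \(Y\) by an independent copy so as to assume \(Y\) is independent of the entire four-tuple \((U_{L/T}^{(1)}, U_T^{(1)}, U_{L/T}^{(2)}, U_T^{(2)})\). Applying Lemma \ref{weylconv} (using \(\mathrm{Law}(U_T^{(1)}) \ll \mu_T\)) yields a random \(W \in N(T)/T\) with \(W \cdot (U_{L/T}^{(1)}, U_T^{(1)}) = (U_{L/T}^{(2)}, U_T^{(2)})\) almost surely; since \(N(T)/T\) is finite, any section is measurable, so I may write \(W = VT\) for a random \(V \in N(T)\).

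Next I would unpack what this identity says at the level of \(\psi\). Using \(vT \cdot aT = av^{-1}T\) and the fact that \(\psi(aT,t) = ata^{-1}\) does not depend on the representative \(a\) (because \(T\) is abelian), a pointwise calculation gives, for any representative \(A\) of \(U_{L/T}^{(1)}\), the a.s.\ identities
\[\psi\bigl(U_{L/T}^{(2)}, Y\bigr) \;=\; AV^{-1} Y V A^{-1} \;=\; \psi\bigl(U_{L/T}^{(1)},\, V^{-1} Y V\bigr) \;=\; \psi\bigl(U_{L/T}^{(1)},\, W^{-1}\cdot Y\bigr),\]
where the Weyl action on \(T\) is \(vT \cdot t := vtv^{-1}\), which is well-defined on \(N(T)/T\) and lands in \(T\) because \(V \in N(T)\).

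It then suffices to show \((U_{L/T}^{(1)}, W^{-1} \cdot Y) \sim (U_{L/T}^{(1)}, Y)\). Conditional on \((U_{L/T}^{(1)}, W) = (u, w)\), the random variable \(W^{-1} \cdot Y\) is distributed as \(w^{-1} \cdot Y\), which has law \(\mu_T\) by the translation invariance of Haar measure under the Weyl action (already noted in this section). Since this conditional law depends on neither \(u\) nor \(w\), \(W^{-1} \cdot Y\) is in fact independent of \((U_{L/T}^{(1)}, W)\) with marginal \(\mu_T\); in particular it is independent of \(U_{L/T}^{(1)}\) with the same marginal as \(Y\), so \((U_{L/T}^{(1)}, W^{-1}\cdot Y) \sim (U_{L/T}^{(1)}, Y)\). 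Applying \(\psi\) to both sides finishes the proof.

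The main obstacle is the independence bookkeeping: the hypothesis only gives \(Y \perp U_{L/T}^{(j)}\), but the final conditional argument requires \(Y\) to also be independent of \(W\), which is built from \((U_T^{(1)}, U_T^{(2)})\). The enlargement in the first step is precisely what secures this, and once it is in place the proof reduces to unpacking the two Weyl actions and invoking Weyl-invariance of \(\mu_T\).
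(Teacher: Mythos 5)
Your proof is correct and follows essentially the same route as the paper: apply Lemma \ref{weylconv} to produce a random Weyl element \(W\) with \(W \cdot (U_{L/T}^{(1)}, U_T^{(1)}) = (U_{L/T}^{(2)}, U_T^{(2)})\), push the Weyl action from the \(L/T\)-component over to the \(T\)-component via the identity \(\psi(W \cdot U_{L/T}^{(1)}, Y) = \psi(U_{L/T}^{(1)}, W^{-1}\cdot Y)\), and absorb it by the Weyl-invariance of \(\mu_T\). You are in fact a bit more careful than the paper about the independence bookkeeping: the decomposition by \(\{W=w\}\) (or, in your phrasing, the conditional argument) genuinely requires \(Y\) to be independent of \((U_{L/T}^{(1)}, W)\), and since \(W\) is built from \(U_T^{(1)}\) and \(U_T^{(2)}\) the stated hypotheses do not directly supply this; your preliminary step of replacing \(Y\) by a copy independent of the entire four-tuple, justified by the observation that \(\mathrm{Law}\bigl(\psi(U_{L/T}^{(j)},Y)\bigr)\) depends only on the product law \(\mathrm{Law}(U_{L/T}^{(j)}) \otimes \mu_T\), closes this small gap cleanly.
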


\begin{proof}
By Lemma \ref{weylconv}, there exists a random \(W \in N(T)/T\) such that \(W \cdot U_{L/T}^{(1)} = U_{L/T}^{(2)}\). For \(w \in N(T)/T\),  \(w \cdot Y \sim Y\), because \(P(w \cdot Y \in A) = P(Y \in w^{-1} \cdot A ) = P( Y \in A)\).  Then we have, for any measurable \(A \subseteq L\),
\begin{align*}
P \Big ((\psi(U_{L/T}^{(2)}, Y) \in A \Big) & \:=\: P \Big(\psi(W \cdot U_{L/T}^{(1)}  \:,\: Y ) \in A \Big) \\
& \:=\: \sum_{w \in N(T)/T} P\Big(\psi(w \cdot U_{L/T}^{(1)} \:,\: Y) \in A \:,\: W = w \Big) \\
& \:=\: \sum_{w \in N(T)/T} P\Big(\psi(U_{L/T}^{(1)} \:,\: w^{-1} \cdot Y) \in A \:,\: W = w \Big) \\
& \:=\: \sum_{w \in N(T)/T} P\Big(\psi(U_{L/T}^{(1)}  \:,\: Y ) \in A \:,\: W = w \Big) \\
& \:=\: P\Big(\psi(U_{L/T}^{(1)}  \:,\: Y) \in A \Big) \, .
\end{align*}
This proves the claim.
\end{proof}

In conclusion, we may say with certainty that the choice of random preimage does not impact the conditions or results of Theorems \ref{mainevalthm} and \ref{lastthm}.







\providecommand{\bysame}{\leavevmode\hbox to3em{\hrulefill}\thinspace}
\providecommand{\MR}{\relax\ifhmode\unskip\space\fi MR }
\providecommand{\MRhref}[2]{%
  \href{http://www.ams.org/mathscinet-getitem?mr=#1}{#2}
}
\providecommand{\href}[2]{#2}

\end{document}